\newcommand{\keywords}[1]{\par\addvspace\baselineskip\noindent\keywordname\enspace\ignorespaces#1}
\DeclareMathAlphabet\gothic{U}{euf}{m}{n}
\def\d{\mathrm{d}}
\newcommand{\vect}[1]{\boldsymbol{#1}}
\newcommand{\ul}[1]{\boldsymbol{#1}}
\newcommand{\R}{\mathbb{R}}
\newcommand{\C}{\mathbb{C}}
\newcommand{\LL}{\mathbb{L}}
\newcommand{\cR} {\mathcal{R}}
\newcommand{\cV} {\mathcal{V}}
\newcommand{\vx}{\mathbf{x}}
\newcommand{\ve}{\mathbf{e}}
\newcommand{\vy}{\mathbf{y}}
\newcommand{\vn}{\mathbf{n}}
\newcommand{\mR}{\mathbf{R}}
\newcommand{\Gh}{\gothic{h}}
\newcommand{\cA}{\mathcal{A}}
\newcommand{\cW}{\mathcal{W}}
\newcommand{\sgn}{\operatorname{sgn}}
\begin{document}

\mainmatter

%
%

\title{Invertible Orientation Scores of 3D Images}

\author{Michiel Janssen$^{1}$ %
\and Remco Duits$^{1,2}$
\and Marcel Breeuwer$^{2}$}
%
%

\titlerunning{Invertible Orientation Scores of 3D Images}
\authorrunning{Invertible Orientation Scores of 3D Images}
\institute{
Eindhoven University of Technology, The Netherlands, \\
$^1$Department of Mathematics and Computer Science,
\\$^2$Department of Biomedical Engineering.
 \\ \email{M.H.J.Janssen@tue.nl,R.Duits@tue.nl,
M.Breeuwer@tue.nl}}

%

%

\maketitle
\begin{abstract}
The enhancement and detection of elongated structures in noisy image data is relevant for many biomedical applications. To handle complex crossing structures in 2D images, 2D orientation scores $U: \mathbb{R} ^ 2\times S ^ 1 \rightarrow \mathbb{R}$ were introduced, which already showed their use in a variety of applications. Here we extend this work to 3D orientation scores $U: \mathbb{R} ^ 3 \times S ^ 2\rightarrow \mathbb{R}$. First, we construct the orientation score from a given dataset, which is achieved by an invertible coherent state type of transform. For this transformation we introduce 3D versions of the 2D cake-wavelets, which are complex wavelets that can simultaneously detect oriented structures and oriented edges. For efficient implementation of the different steps in the wavelet creation we use a spherical harmonic transform. Finally, we show some first results of practical applications of 3D orientation scores.

\keywords{Orientation Scores, Reproducing Kernel Spaces, 3D Wavelet Design, Scale Spaces on SE(3), Coherence Enhancing Diffusion on SE(3)}
\end{abstract}

%
%

\section{Introduction}
The enhancement and detection of elongated structures is important in many biomedical image analysis applications. These tasks become problematic when multiple elongated structures cross or touch each other in the data. In these cases it is useful to decompose an image in local orientations by constructing an orientation score. In the orientation score, we extend the domain of the data to include orientation in order to separate the crossing or touching structures (Fig. \ref{fig:2DOS}). From 3D data $f:\R^ 3\rightarrow \R$ we construct a 3D orientation score $U:\R^ 3\times S ^ 2 \rightarrow \R$, in a similar way as is done for the more common case of 2D data $f:\R^ 2\rightarrow \R$ and 2D orientation score $U:\R^ 2\times S^1 \rightarrow \R$. Next, we consider operations on orientation scores, and process our data via orientation scores (Fig. \ref{fig:OverviewOperations}). For such operations it is important that the orientation score transform is invertible, in a well-posed manner. In comparison to continuous wavelet transforms on the group of 3D rotations, translations and scalings, we use all scales simultaneously and exclude the scaling group from the wavelet transform and its adjoint, yielding a coherent state type of transform \cite{Ali1998}, see App.A. This makes it harder to design appropriate wavelets, but has the computational advantage of only needing a single scale transformation.

The 2D orientation scores have already showed their use in a variety of applications. In \cite {Franken2009,Sharma2014} the orientation scores were used to perform crossing-preserving coherence-enhancing diffusions. These diffusions greatly reduce the noise in the data, while preserving the elongated crossing structures. Next to these generic enhancement techniques, the orientation scores also showed their use in retinal vessel segmentation \cite {Bekkers2013}, where they were used to better handle crossing vessels in the segmentation procedure.

To perform detection and enhancement operations on the orientation score, we first need to transform a given greyscale image or 3D dataset to an orientation score in an invertible way. In previous works various wavelets were introduced to perform a 2D orientation score transform. Some of these wavelets did not allow for an invertible transformation (e.g. Gabor wavelets \cite {Lee1996}). A wavelet that allows an invertible transformation was proposed by Kalitzin \cite {Kalitzin1999}. A generalization of these wavelets was found by Duits \cite {ThesisDuits} who derived a unitarity result and expressed the wavelets in a basis of eigenfunctions of the harmonic oscillator. This type of wavelet was also extended to 3D. This wavelet however has some unwanted properties such as poor spatial localization (oscillations) and the fact that the maximum of the wavelet did not lie at its center \cite[Fig. 4.11]{ThesisDuits}. In \cite {ThesisDuits} a class of cake-wavelets were introduced, that have a cake-piece shaped form in the Fourier domain (Fig. \ref{fig:CakeWavelets}). The cake-wavelets simultaneously detect oriented structures and oriented edges by constructing a complex orientation score $U:\R ^ 2 \times S^1 \rightarrow \C$. Because the different cake-wavelets cover the full Fourier spectrum, invertibility is guaranteed.

In this paper we propose an extension of the 2D cake-wavelets to 3D. First, we discuss the theory of invertible orientation score transforms. Then we construct 3D cake-wavelets and give an efficient implementation using a spherical harmonic transform. Finally we mention two application areas for 3D orientation scores and show some preliminary results for both of them. In the first application, we present a practical proof of concept of a natural extension of the crossing preserving coherence enhancing diffusion on invertible orientation scores (CEDOS) \cite {Franken2009} to the 3D setting. Compared to the original idea of coherence enhancing diffusion acting directly on image-data \cite{Weickert1999,BurgethBook2009,Burgeth2012} we have the advantage of preserving crossings. Diffusions on SE(3) have been studied in previous SSVM-articles, see e.g. \cite {Creusen2012}, but the full generalization of CEDOS to 3D was never established.


\begin{figure}[b]
\parbox{.64\textwidth}{
	\includegraphics[width=.60\textwidth]{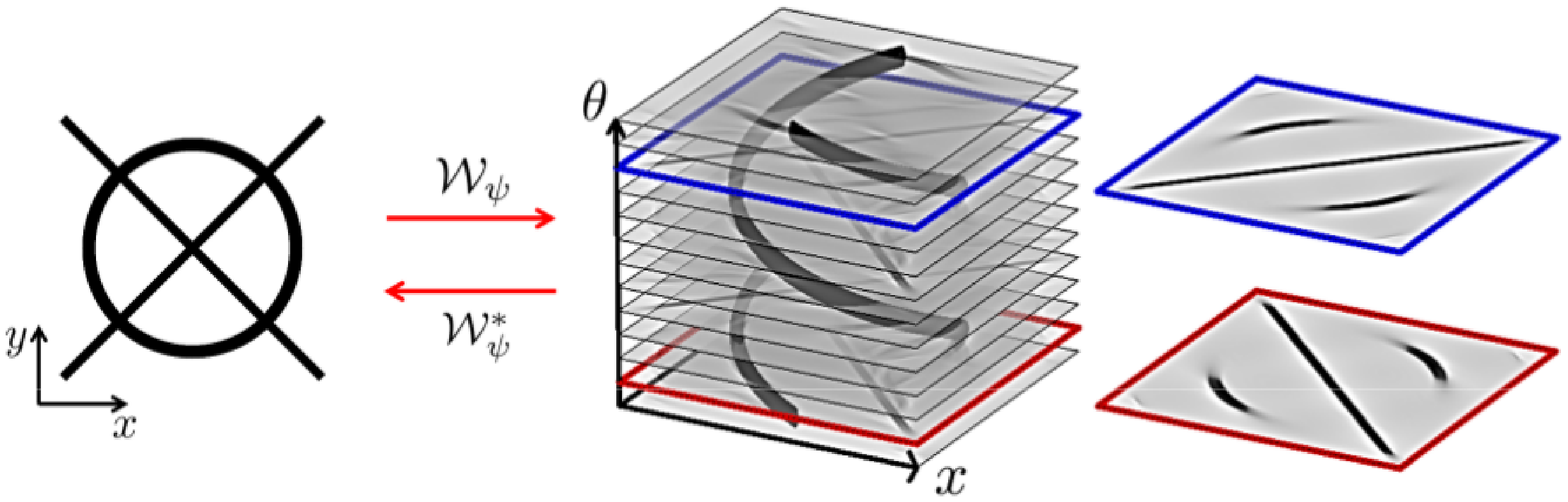}
}\hfill
\parbox{.35\textwidth}{
	\includegraphics[width=.35\textwidth]{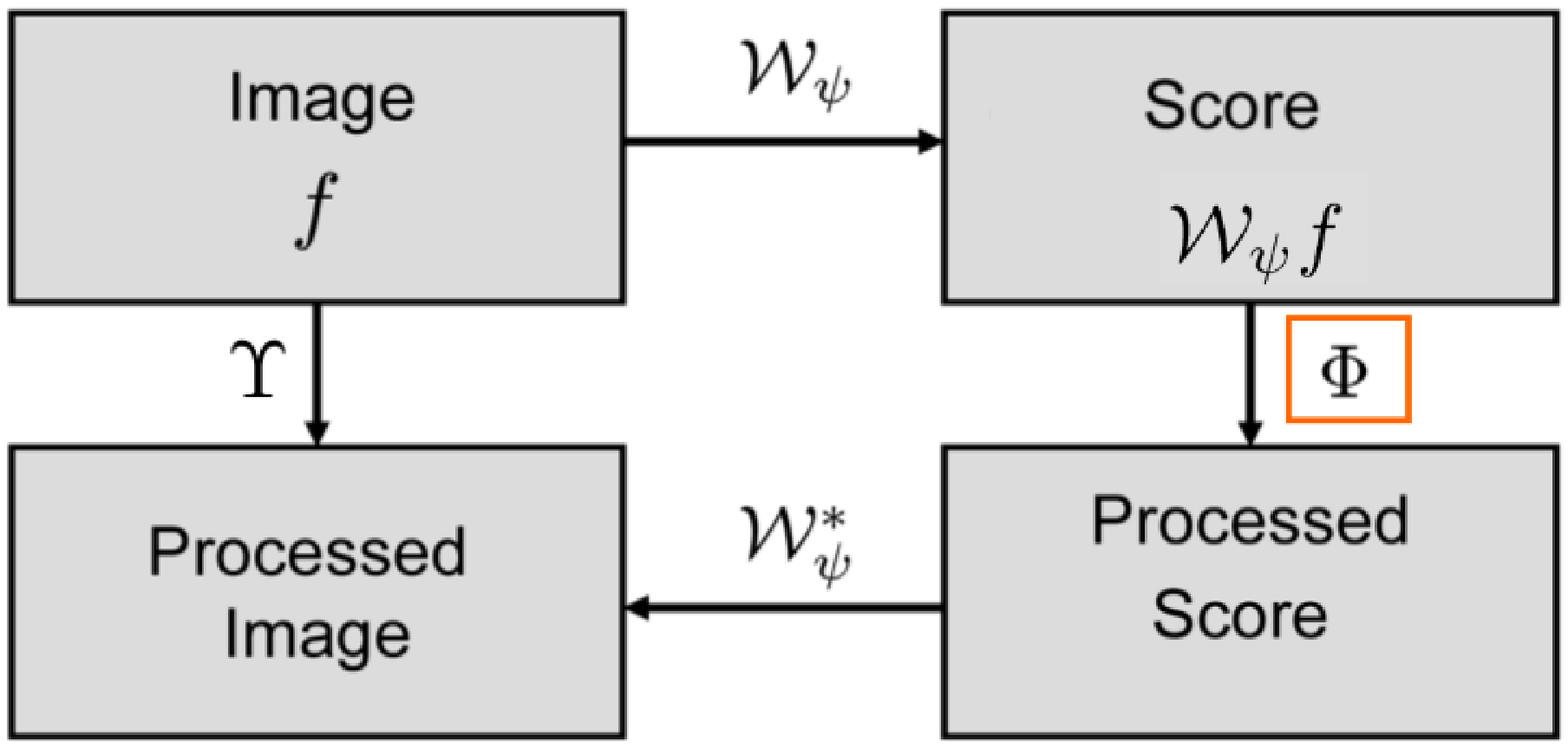}
}\hfill\null
\parbox[t]{.60\linewidth}{
	\caption{2D Orientation score for an exemplary image. In the orientation score crossing structures are disentangled because the different structures have a different orientation.}
	\label{fig:2DOS}
}\hfill
\parbox[t]{.35\linewidth}{
	\caption{A schematic view of image processing via invertible orientation scores.}%
	\label{fig:OverviewOperations}
}\hfill\null
\end{figure}


\begin{figure}[t]%
\centering
\includegraphics[width=0.6\textwidth]{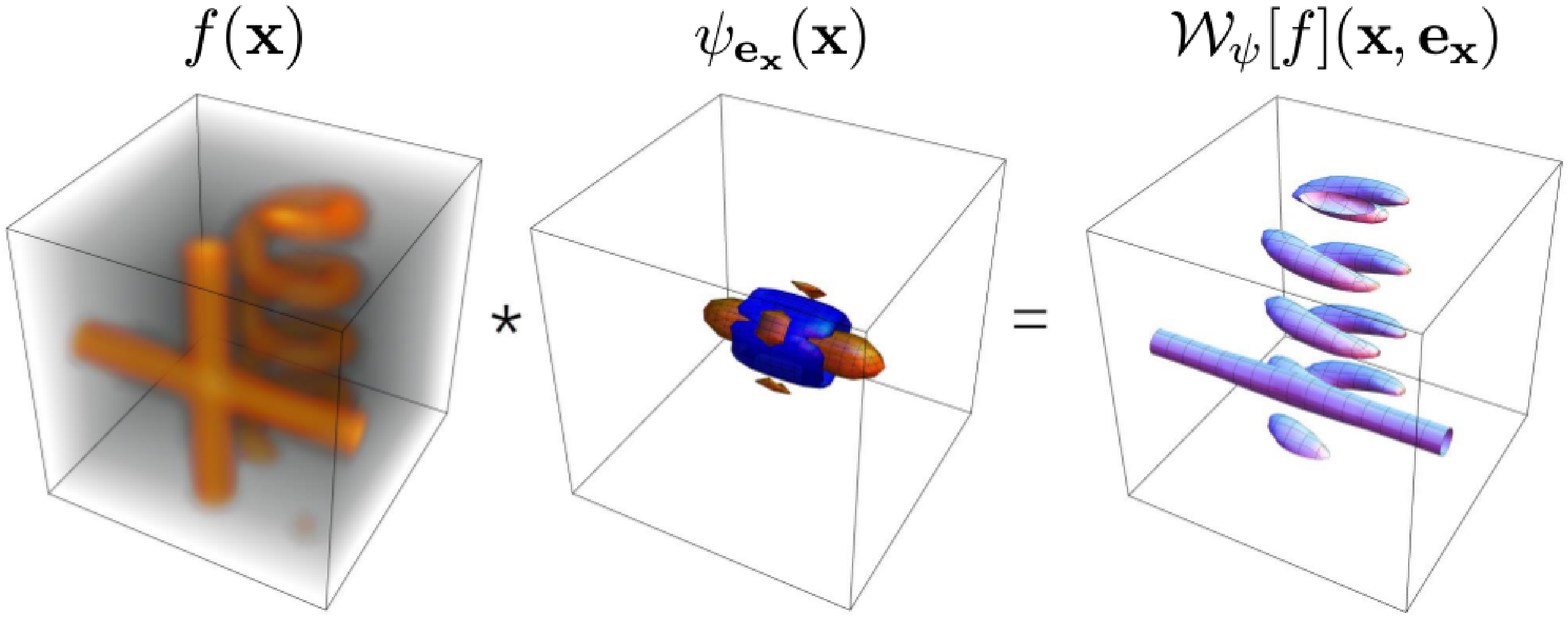}\\
\includegraphics[width=0.7\textwidth]{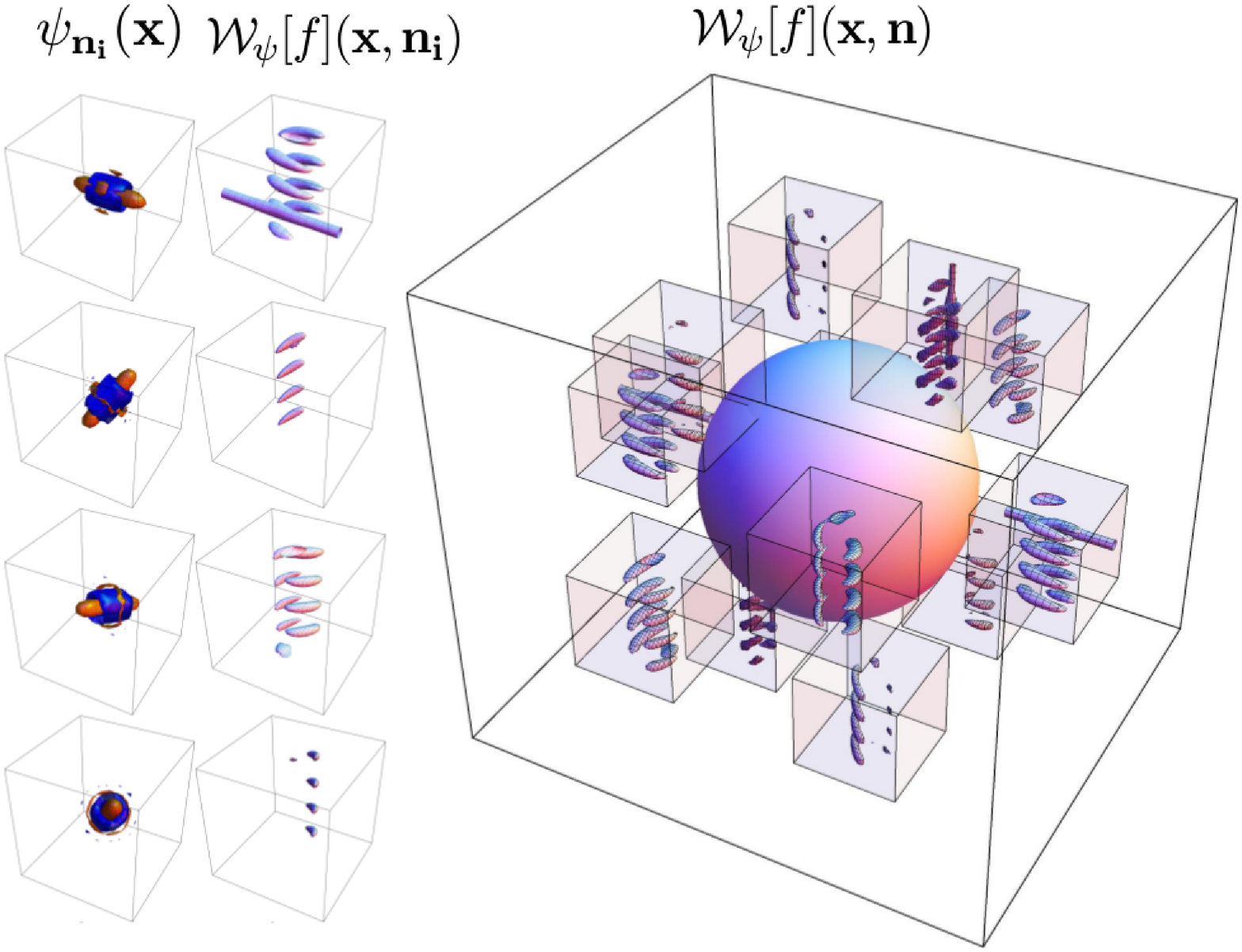}
\caption{Creating a 3D orientation score. Top: The data $f$ is correlated with an oriented filter $\psi_{\ve_x}$ to detect structures aligned with the filter orientation $\ve_x$. Bottom left: This is repeated for a discrete set of filters with different orientations. Bottom right: The collection of 3D datasets constructed by correlation with the different filters is an orientation score and is visualized by placing a 3D dataset on a number of orientations.}%
\label{fig:3DOSIntro}%
\end{figure}

\section {Invertible Orientation Scores}
An invertible orientation score $\cW_\psi[f]:\R^3 \times S^2 \rightarrow \C$ is constructed from a given ball-limited 3D dataset $f \in \LL_2^{\varrho} (\R^3)=\{f \in \LL_2 (\R^3)| \textrm{supp}(\mathcal{F}f) \subset B_{0,\varrho}\}$, with $\varrho>0$ by correlation $\star$ with an anisotropic kernel
\begin{equation}
(\cW_\psi[f])(\vx,\vn)=(\overline{\psi_\vn} \star f)(\vx)=\int_{\R ^ 3}  \overline {\psi_\vn (\vx'-\vx)}f(\vx')\, \d \vx ',
\label{eq:Construction1}
\end{equation}
where $\psi \in \LL_2(\R^3) \cap \LL_1(\R^3)$ is a wavelet aligned with and rotationally symmetric around the $z$-axis, and $\psi_{\vn} (\vx)=\psi (\mR_{\vn}^T \vx)  \in \LL_2(\R^3)$ the rotated wavelet aligned with $\vn$. Here $\mR_{\vn}$ is any rotation which rotates the $z$-axis onto $\vn$ where the specific choice of rotation does not matter because of the rotational symmetry of $\psi$. The overline denotes a complex conjugate. The exact reconstruction formula for this transformation is
\begin{equation}
\begin{split}
f(\vx) &= (\cW_\psi^{-1}[\cW_\psi[f]])(\vx) \\
&= \mathcal{F}_{\R^3}^{-1} \left[ M_\psi^{-1}  \mathcal{F}_{\R^3} \left[ \tilde{\vx} \mapsto \int_{S^2} (\check {\psi}_\vn \star \cW_\psi[f](\cdot,\vn))(\tilde{\vx})\, \d \sigma(\vn) \right] \right] (\vx),
\end{split}
\label{eq:Reconstruction1}
\end{equation}
 with $\mathcal{F}_{\R^3}$ the Fourier transform on $\R^3$ given by $(\mathcal{F} f)(\vect{\omega}) = (2\pi) ^ {-\frac{3}{2}} \int_{\R ^ 3 } e ^ {-i \vect{\omega}\cdot \vx} f(\vx) \d \vx$ and $\check {\psi}_\vn (\vx ) =\psi _\vn(-\vx)$. In fact $\cW_\psi$ is a unitary mapping on to a reproducing kernel space, see App. A. The function $M_\psi: \R^3 \rightarrow \R^+ $ is given by
\begin{equation}
M_\psi(\vect{\omega}) = (2\pi) ^ {\frac{3}{2}}  \int_{S^2} \left| \mathcal{F}_{\R^3} [\psi_\vn](\vect{\omega}) \right|^2 \d \sigma(\vn).
\end{equation}
The function $M_\psi$ quantifies the stability of the inverse transformation \cite{ThesisDuits}, since $M_\psi(\vect{\omega})$ specifies how well frequency component $\vect{\omega}$ is preserved by the cascade of construction and reconstruction when $M_\psi^{-1}$ would not be included in Eq.~\!(\ref{eq:Reconstruction1}). An exact reconstruction is possible as long as
\begin{equation}
\exists_{M> 0,\delta> 0} \quad 0<\delta \leq M_\psi (\vect{\omega}) \leq M<\infty, \quad \textrm {for all } \vect{\omega}=B_{0,\varrho}.
\label{eq:AdmissibilityRequirement}
\end{equation}
In practice it is best to aim for $M_\psi (\vect{\omega}) \approx 1,$  in view of the condition number of $W_\psi:\LL_2^\varrho (\R^3)\rightarrow \LL_2^\varrho (\R^3\times S^2)$ with $W_\psi f=\cW_\psi f$. Also, when  $M_\psi (\vect{\omega}) = 1$  we have $\LL_2$-norm preservation
\begin{equation}
\|f\|_{\LL_2(\R^3)}^2 = \|\cW_\psi f\|_{\LL_2 (\R^3\times S^2)}^2,\quad \textrm{for all } f \in \LL_2^\varrho (\R^3),
\end{equation}
and Eq.~\!(\ref{eq:Reconstruction1}) simplifies to $f(\vx) = \int_{S^2} (\check{\psi}_\vn \star \cW_\psi[f](\cdot,\vn))(\vx) \d \sigma(\vn)$. We can further simplify the reconstruction for wavelets for which $(2\pi) ^ {\frac{3}{2}}  \int_{S^2} \mathcal{F}_{\R^3} [\psi_\vn](\vect{\omega}) \d \sigma(\vn) \approx 1$, where the reconstruction formula simplifies to an integration over orientations

\begin{equation}
f(\vx) \approx \int_{S^2} \mathcal{W}_{\psi}f(\vx,\vn)\, \d \sigma(\vn).
\label{eq:Reconstruction2Approximation}
\end{equation}








\subsection {Discrete Invertible Orientation Score Transformation}
In the previous section, we considered a continuous orientation score transformation. In practice, we have only a finite number of orientations. To determine this discrete set of orientations we uniformly sample the sphere using platonic solids and/or refine this using tessellations of the platonic solids.

Assume we have a number $N_o$ of orientations $\cV =\{\vn_1,\vn_2,...,\vn_{N_o}\}\subset S^2$, and define the discrete invertible orientation score $\cW_\psi^d[f]:\R^3\times \cV \rightarrow \C$ by
\begin{equation}
(\cW_\psi^d[f])(\vx,\vn_i)=(\overline{\psi_{\vn_i}} \star f)(\vx).
\label{eq:construction1Discrete}
\end{equation}
The exact reconstruction formula is in the discrete setting given by
\begin{equation}
\begin{split}
f(\vx) &= ((\cW_\psi^d)^{-1}[\cW_\psi^d[f]])(\vx) \\
&= \mathcal{F}_{\R^3}^{-1} \left[ (M_\psi^d)^{-1}  \mathcal{F}_{\R^3} \left[ \tilde {\vx} \rightarrow \sum_{i=1}^{N_o} (\check {\psi}_{\vn_{i}} \star \cW_\psi^d[f](\cdot,\vn_i))(\tilde {\vx}) \d \sigma(\vn_i) \right] \right] (\vx),
\end{split}
\label{eq:Reconstruction1Discrete}
\end{equation}
with $\d \sigma(\vn_i)$ the discrete spherical area measure which for reasonably uniform spherical sampling can be approximated by $\d \sigma(\vn_i)\approx \frac{4 \pi}{N_o}$, and
\begin{equation}
M_\psi^d(\vect{\omega}) = (2\pi) ^ {\frac{3}{2}}  \sum_{i=1}^{N_o} \left| \mathcal{F}_{\R^3} [\psi_{\vn_i}](\vect{\omega}) \right|^2 \d \sigma(\vn_i).
\end{equation}
Again, an exact reconstruction is possible iff $0<\delta\leq M_\psi^d (\vect{\omega})\leq M<\infty$.

\section{3D Cake-Wavelets}
A class of 2D cake-wavelets, see \cite{ThesisDuits}, was successfully used for the 2D orientation score transformation. We now generalize these 2D cake-wavelets to 3D cake-wavelets. Our 3D transformation using the 3D cake-wavelets should fulfill a set of requirements, compare \cite{Franken2009} :

\begin{enumerate}
	\item The orientation score should be constructed for a finite number ($N_o$) of orientations.
	\item The transformation should be invertible and all frequencies should be transferred equally to the orientation score domain ($M_\psi^d \approx 1$).
	\item The kernel should be strongly directional.
\item The kernel should be polar separable in the Fourier domain, i.e., $(\mathcal{F}\psi) (\vect{\omega}) =g(\rho) h (\theta,\phi)$, with $\vect{\omega} = (\omega_x,\omega_y,\omega_z) = (\rho \sin \theta \cos \phi,\rho \sin \theta \sin \phi,\rho \cos \theta)$. Because by definition the wavelet $\psi$ has rotational symmetry around the $z$-axis we have $h (\theta,\phi) = \Gh (\theta)$.
\item The kernel should be localized in the spatial domain, since we want to pick up local oriented structures.
\item The real part of the kernel should detect oriented structures and the imaginary part should detect oriented edges. The constructed oriented score is therefore a complex orientation score.
\end{enumerate}

\subsection {Construction of Line and Edge Detectors}\label{ssect:Cake}
We now discuss the procedure used to make 3D cake-wavelets. According to requirement 4 we only consider polar separable wavelets in the Fourier domain, so that $(\mathcal{F}\psi) (\vect{\omega}) =g(\rho) \Gh (\theta)$. For the radial function $g (\rho)$ we use, as in \cite{Franken2009},
\begin{equation}
g (\rho)=\mathcal{M}_N(\rho^2 t^{-1})= e^{-\frac{\rho^2}{t}}\sum_{k=0}^N \frac{(\rho^2 t^{-1})^k}{k !},
\label{eq:}
\end{equation}
which is a Gaussian function with scale parameter $t$ multiplied by the Taylor approximation of its reciprocal to order $N$ to ensure a slower decay. This function should go to 0 when $\rho$ tends to the Nyquist frequency $\rho_N$. Therefore the inflection point of this function is fixed at $\gamma \, \rho_N$ with $0 \ll \gamma < 1$ by setting $t=\frac{2 (\gamma \,\rho_N)^ 2}{1+ 2N}$. In practice we have $\varrho=\rho_N$, and because radial function $g$ causes $M_\psi^d$ to become really small when coming close to the Nyquist frequency, reconstruction Eq.\eqref{eq:Reconstruction1Discrete} becomes unstable. We solve this by either using approximate reconstruction Eq.\eqref{eq:Reconstruction2Approximation} or by replacing $M_\psi^d \rightarrow \max(M_\psi^d,\epsilon)$, with $\epsilon$ small. Both make the reconstruction stable at the cost of not completely reconstructing the highest frequencies which causes some additional blurring.

We now need to find an appropriate angular part $\Gh$ for the cake-wavelets. First, we specify an orientation distribution $A:S^2\rightarrow \R^+$, which determines what orientations the wavelet should measure. To satisfy requirement 3 this function should be a localized spherical window, for which we propose a B-spline $A(\theta,\phi) =B ^k (\frac{\theta} {s_\theta})$, with $s_{\theta}>0$ and $B^k$ the $k$th order B-spline given by
\begin{equation}
 B ^k (x) = (B ^ {k-1}*B ^ 0)(x) , \quad B ^ 0(x) =\begin {cases} 1 & \text {if} -\frac{1}{2} < x < \frac{1}{2} \\
 0 & \text{otherwise} \end{cases}.
\label{eq:}
\end{equation}
The parameter $s_{\theta}$ determines the trade-off between requirements 2 and 3, where higher values give a more uniform $M_\psi^d$ at the cost of less directionality.

First consider setting $h=A$ so that $\psi$ has compact support within a convex cone in the Fourier domain. The real part of the corresponding wavelet would however be a plate detector and not a line detector (Fig. \ref{fig:cakePieceCreatesPlateDetector}). The imaginary part is already an oriented edge detector, and so we set

\begin{equation}
\Gh_ {Im} (\phi) = A(\theta,\phi) -A(\pi -\theta,\phi+\pi)=B ^k \left(\frac{\theta} {s_\theta}\right)-B ^k \left(\frac{\pi-\theta} {s_{\theta}}\right),
\label{eq:Antisymmetrize}
\end{equation}
where the real part of the earlier found wavelet vanishes by anti-symmetrization of the orientation distribution $A$ while the imaginary part remains. As to the construction of $h_ {Re}$, there is the general observation that we detect a structure that is perpendicular to the shape in the Fourier domain, so for line detection we should aim for a plane detector in the Fourier domain. To achieve this we apply the Funk transform to $A$, and we define
\begin{equation}
h_{Re}(\theta,\phi) =F A(\theta,\phi) = \int_{S_p(\vn (\theta,\phi))} \! A(\vn') \, \d s(\vn'),
\label{eq:CakeWaveletRe}
\end{equation}
where integration is performed over $S_p (\vn)$ denoting the great circle perpendicular to $\vn (\theta,\phi) = (\sin \theta \cos \phi,\sin \theta \sin \phi,\cos \theta)$. This transformation preserves the symmetry of $A$, so we have $h_{Re} (\theta,\phi) =\Gh_{Re} (\theta)$. Thus, we finally set
\begin{equation}
\Gh(\theta) =\Gh_{Re}(\theta) +\Gh_{Im} (\theta).
\label{eq:}
\end{equation}
For an overview of the transformations see  Fig. \ref{fig:CakeWavelets}.

\subsection {Efficient Implementations Via Spherical Harmonics}
In Subsection \ref{ssect:Cake} we defined the real part and the imaginary part of the wavelets in terms of a given orientation distribution. In order to efficiently implement the various transformations (e.g. Funk transform), and to create the various rotated versions of the wavelet we express our orientation distribution $A$ in a spherical harmonic basis $\{Y_l^m\}$ up to order $L$:
\begin{equation}
A(\theta,\phi) =\sum_{l = 0} ^L \sum_{m= -l} ^ {l} c_{l,m} Y_l ^m(\theta,\phi), \quad L \in \mathbb{N}.
\label{eq:}
\end{equation}
Because of the rotational symmetry around the $z$-axis, we only need the spherical harmonics with $m = 0$, i.e., $A(\theta,\phi) =\sum_{l = 0} ^L  c_{l, 0} Y_l ^ 0 (\theta,\phi)$. For determining the spherical harmonic coefficients we use the pseudo-inverse of the discretized inverse spherical harmonic transform (see \cite[Section 7.1]{DuitsIJCV2010}), with discrete orientations given by an icosahedron of tesselation order 15.

\subsubsection {Funk Transform}
According to \cite {Descoteaux2007}, the Funk transform of a spherical harmonic equals
\begin{equation}
F Y_l^m  (\theta,\phi) = \int_{S_p(\vn(\theta,\phi))} \! Y_l^m(\vn') \, \d s(\vn') =2\pi P_l(0)Y_l^m  (\theta,\phi),
\label{eq:}
\end{equation}
with  $P_l (0) $ the Legendre polynomial of degree $l$ evaluated at $0$. We can therefore apply the Funk transform to a function expressed in a spherical harmonic basis by a simple transformation of the coefficients $c_l^m \rightarrow  2\pi P_l(0)c_l^m$.

\subsubsection {Anti-Symmetrization}
We have $Y_l^m(\pi -\theta,\phi +\pi)=(-1)^l Y_l^m(\theta,\phi)$. We therefore anti-symmetrize the orientation distribution Eq.~\!(\ref{eq:Antisymmetrize}) via $c_l^m\rightarrow (1-(-1)^l) c_l ^ m$.

\subsubsection {Making Rotated Wavelets}
To make the rotated versions $\psi_\vn$ of wavelet $\psi$ we have to find $h_\vn$ in $\Psi_\vn =g (\rho)h_\vn (\theta,\phi)$. To achieve this we use the steerability of the spherical harmonic basis. Spherical harmonics rotate according to the irreducible representations of the SO(3) group $D_{m,m '} ^ l (\alpha,\beta,\gamma)$ (Wigner-D functions)

\begin{equation}
\cR_ {\mR_{\alpha ,\beta ,\gamma}}Y_l^m(\theta ,\phi )=\sum _{m'=l}^l D_{m,m'}^l(\alpha ,\beta ,\gamma )Y_l^{m'}(\theta ,\phi ).
\label{eq:}
\end{equation}
Here $\alpha, \beta $ and $\gamma$ denote the Euler angles with counterclockwise rotations, i.e., $\mR =\mR_ {\ve_z,\alpha} \mR_ {\ve_y,\beta} \mR_ {\ve_z,\gamma}$. This gives
\begin{equation}
h_\vn (\theta,\phi)=\cR_{\mR_{\alpha ,\beta ,\gamma }} h(\theta ,\phi ) = \overset{L }{\sum _{l=0} }\overset{l}{\sum _{m=-l} }\overset{l}{\sum _{m'=-l} }a_{l, m} D_{m, m'}^l(\alpha ,\beta ,\gamma )Y_l^{m'}(\theta ,\phi ).
\label{eq:Rotationh}
\end{equation}
Because both anti-symmetrization and Funk transform preserve the rotational symmetry of $A$, we have $h(\theta,\phi) =\sum_{l = 0} ^L  a_{l, 0} Y_l ^ 0 (\theta,\phi)$, and Eq.~\!(\ref{eq:Rotationh}) reduces to
\begin{equation}
h_\vn (\theta,\phi)= \overset{L }{\sum _{l=0}} \overset{l}{\sum _{m'=-l} } a_{l, 0} D_{0, m'}^l(0 ,\beta ,\gamma )Y_l^{m'}(\theta ,\phi ) .
\label{eq:}
\end{equation}

\begin{figure}[p]
\centering
	\includegraphics[width=0.9 \textwidth]{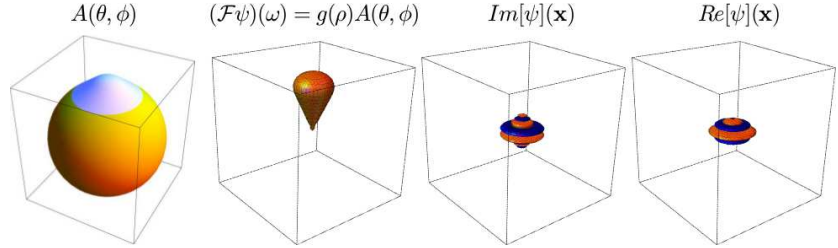}
  \caption{When directly setting orientation distribution $A$ as angular part of the wavelet $h$ we construct plate detectors. From left to right: Orientation distribution $A$, wavelet in the Fourier domain, the plate detector (real part) and the edge detector (imaginary part). Orange: Positive iso-contour. Blue: Negative iso-contour. Parameters used: $L=16,s_\theta=0.6,k=2,N=20,\gamma=0.8$ and evaluated on a grid of 51x51x51 pixels.}
	\label{fig:cakePieceCreatesPlateDetector}
\end{figure}

\begin{figure}[p]
\centering
\includegraphics[width=0.85 \textwidth]{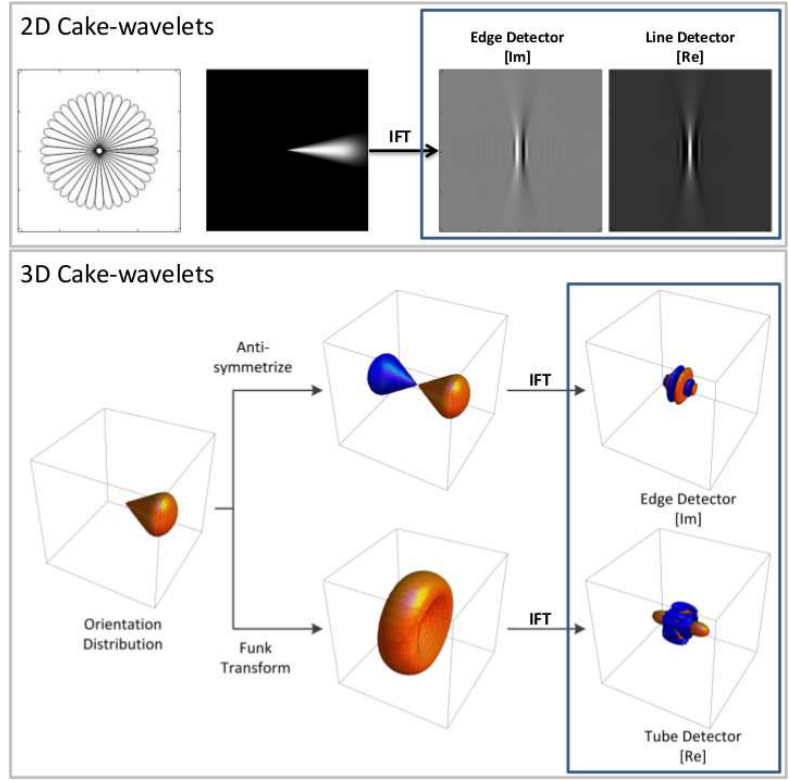}%
\caption{Cake-Wavelets. Top: 2D cake-wavelets. From left to right: Illustration of the Fourier domain coverage, the wavelet in the Fourier domain and the real and imaginary part of the wavelet in the spatial domain. \cite{Bekkers2013}. Bottom: 3D cake-wavelets. Overview of the transformations used to construct the wavelets from a given orientation distribution. Upper part: The wavelet according to Eq.~\!(\ref{eq:Antisymmetrize}). Lower part: The wavelet according to Eq.~\!(\ref{eq:CakeWaveletRe}). IFT: Inverse Fourier Transform. Parameters used: $L=16,s_\theta=1.05,k=2,N=20,\gamma=0.8$ and evaluated on a grid of 31x31x31 pixels.}%
\label{fig:CakeWavelets}%
\end{figure}

\section {Applications}

\subsection {Adaptive Crossing Preserving Flows}
We now use the invertible orientation score transformation to perform data-enhancement according to Fig. \ref{fig:OverviewOperations}. Because $\R ^ 3\times S ^ 2$ is not a Lie group, it is common practice to embed the space of positions and orientations in the Lie group of positions and rotations SE(3) by setting
\begin{equation}
\tilde{U}(\vx,\mR)=U(\vx,\mR \cdot \ve_z),\quad U(\vx,\vn)=\tilde{U}(\vx,\mR_n),
\label{eq:EquivalenceRelation}
\end{equation}
with $\mR_\vn$ any rotation for which $\mR_\vn \cdot \ve_z = \vn$. This holds in particular for orientation scores $U=\mathcal{W}_{\psi}f$.
The operations $\Phi $ which we consider are scale spaces on SE(3) (diffusions), and are given by $\Phi=\Phi_t$ with
\begin{equation}
\Phi_t(U)(\vy,\vn)=\tilde{W}(y,\mR_\vn,t).
\label{eq:}
\end{equation}
Here $\tilde {W}$ is the solution of
\begin{equation}
\quad \frac{\partial \tilde{W}}{\partial t}(g,t) =\sum_{i,j=1}^6 \cA_i|_g D_{i j} \cA_j|_g \tilde{W}(g,t),\quad \tilde{W}|_{t=0}=\widetilde{\cW_\psi[f]},
\label{eq:}
\end{equation}
where in coherence enhancing diffusion on orientation scores (CEDOS) $D_{i j}$ is adapted locally to data $\widetilde{\cW_\psi[f]}$ based on exponential curve fits (see \cite{GaugeFrameNew}), and with $\cA_i|_{g=(\vx,\mR)}=(L_g)_* \cA_i|_e$ the left-invariant vector fields on SE(3), for motivation and details see \cite {DuitsIJCV2010}. Furthermore $D_{i j}$ is chosen such that equivalence relation Eq.~\!\eqref{eq:EquivalenceRelation} is maintained for $\tilde {W}$. These operations are already used without adaptivity in the field of diffusion weighted MRI, where similar data (of the type $\R ^ 3\times S ^ 2\rightarrow \R ^ +$) is enhanced \cite {DuitsIJCV2010}. We then obtain Euclidean invariant image processing via
\begin{equation}
\Upsilon f = \cW_\psi ^ {*,ext}\circ \Phi \circ \cW_\psi f= \cW_\psi ^ {*}\circ \mathbb {P}_\psi  \Phi \circ \cW_\psi f
\label{eq:}
\end{equation}
which includes inherent projection $\mathbb{P}_\psi$ of orientation scores, even if $\Phi=\Phi_t$ maps outside of the space of orientation scores in the embedding space (see App. \ref{app:AppendixA}). Below we show some preliminary results of these flows that enhance the elongated structures while preserving the crossing, Fig. \ref{fig:Results} and Fig. \ref{fig:ResultsAdamKiewitzc}.

\begin{figure}[ht!]
\centering
  \includegraphics[width=0.24 \textwidth]{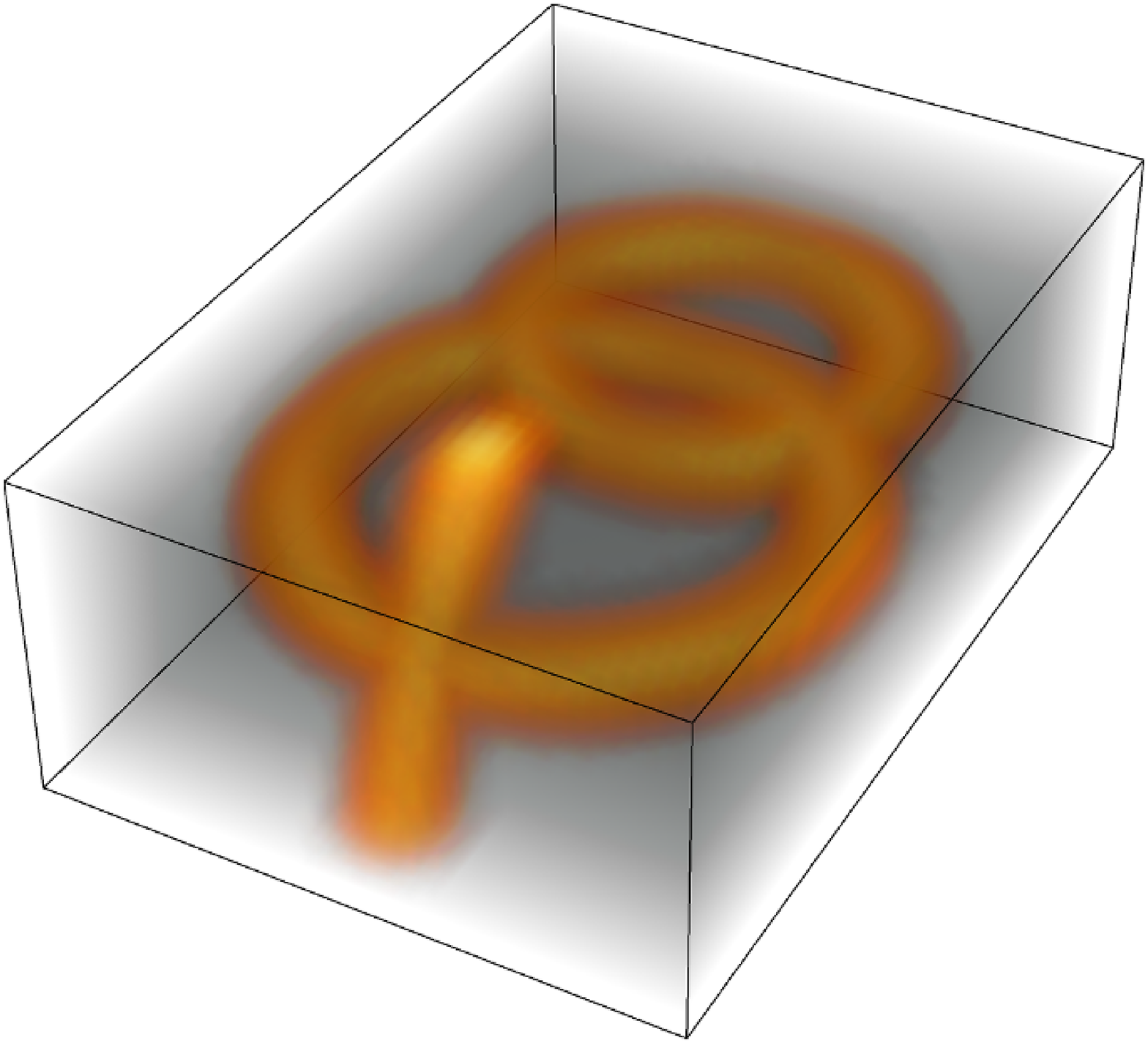}
	\includegraphics[width=0.16 \textwidth]{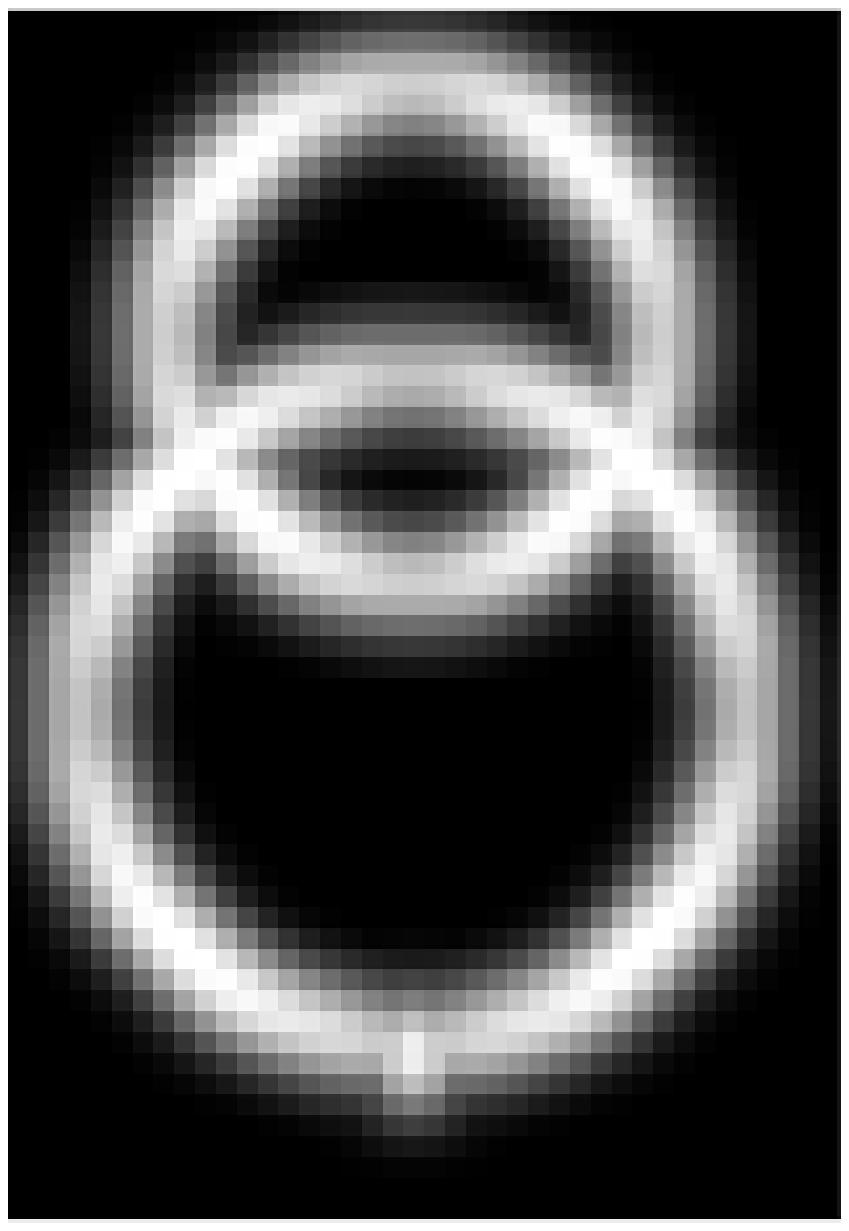}
	\includegraphics[width=0.16 \textwidth]{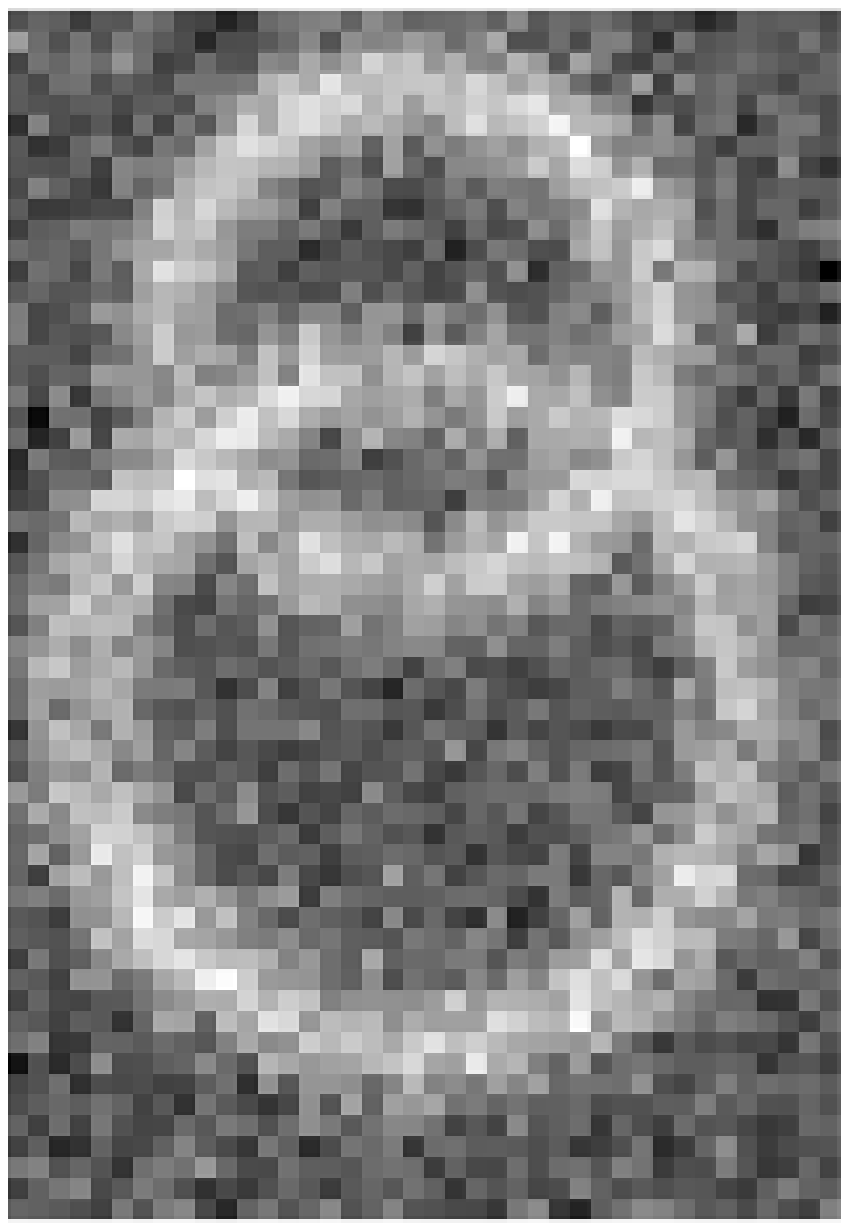}
	\includegraphics[width=0.16 \textwidth]{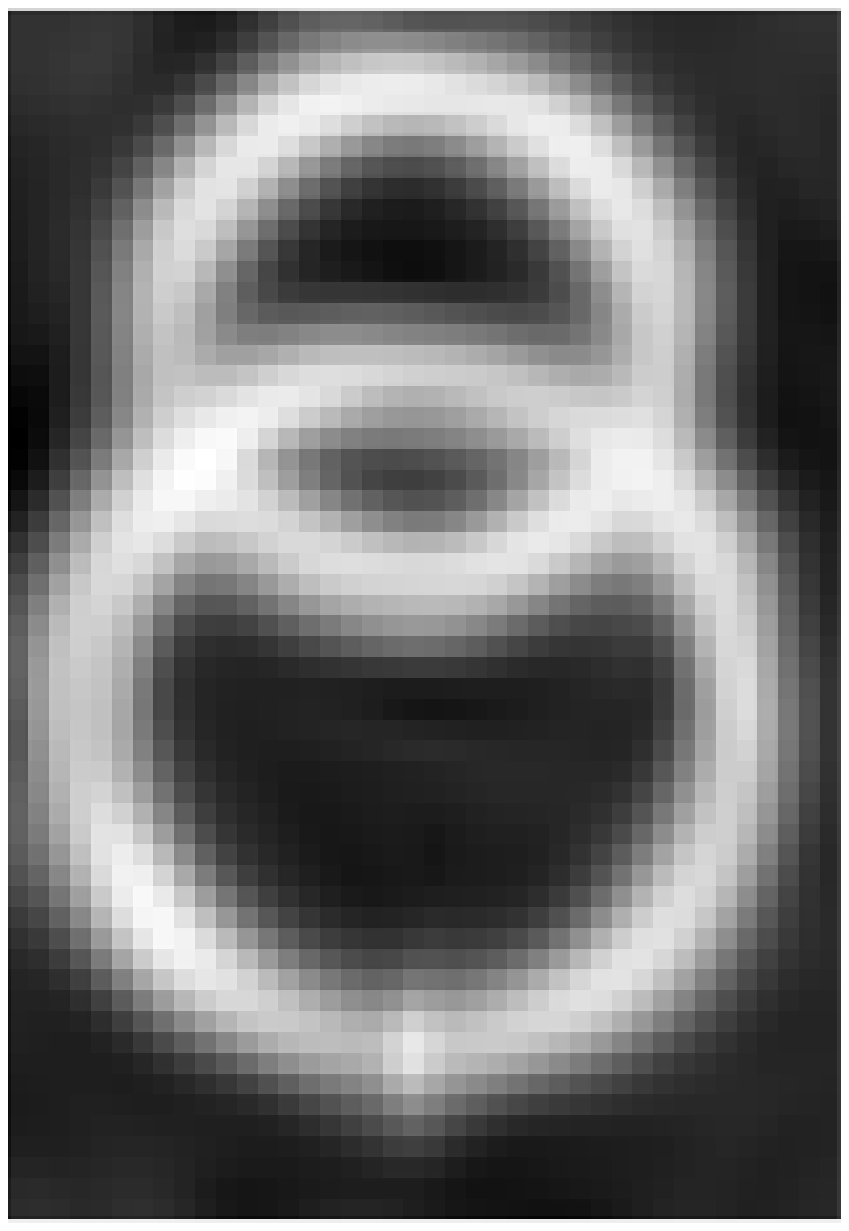}
  \caption{Adaptive Crossing Preserving Flows. From left to right 3D visualization of artificial data, slice of data, slice of (data + Gaussian noise), slice of enhanced data. For the orientation score transformation we use: $N_0=42,s_\theta=0.7,k=2,N=20,\gamma=0.85,L=16$ evaluated on a grid of 21x21x21 pixels. We use approximate reconstruction Eq.\eqref{eq:Reconstruction1Discrete}, and for diffusion we set $t=10$. For the choice of $D_{i j}$ in CEDOS, see \cite{GaugeFrameNew}.}
	\label{fig:Results}\end{figure}

\begin{figure}[ht!]
\centering
  \includegraphics[width=0.17 \textwidth]{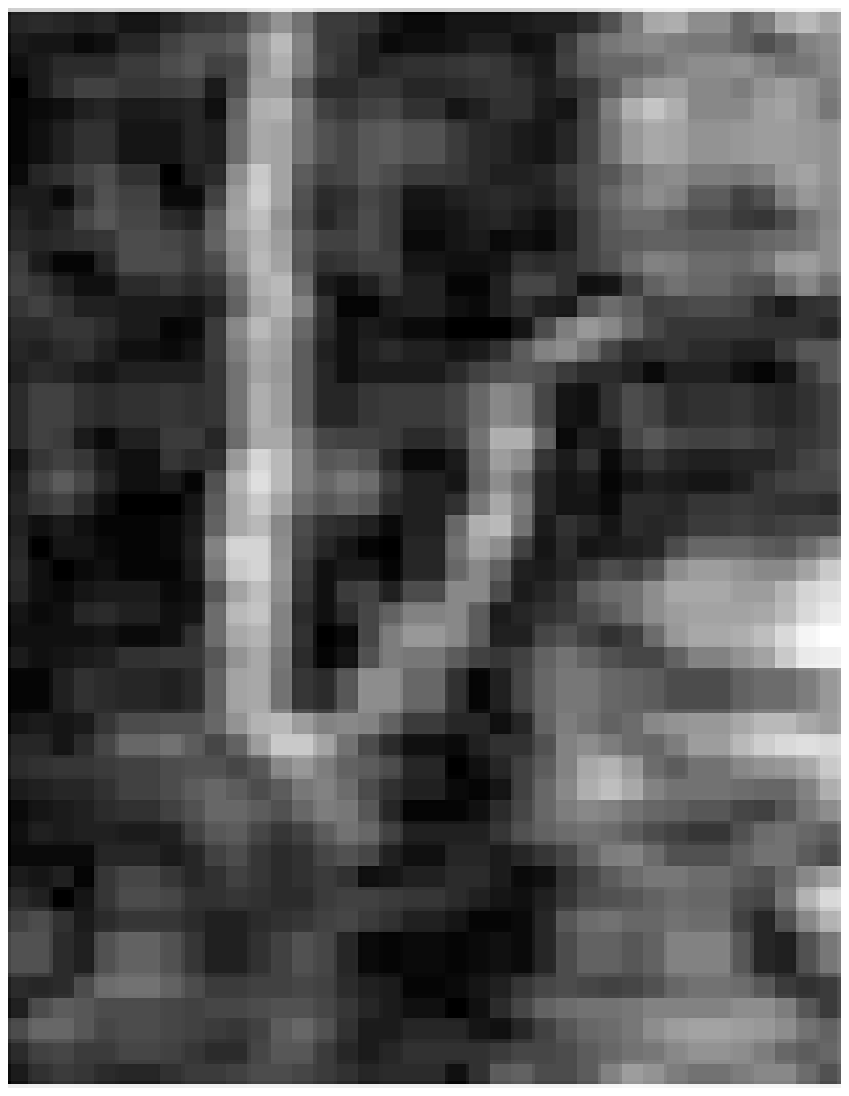}
	\includegraphics[width=0.17 \textwidth]{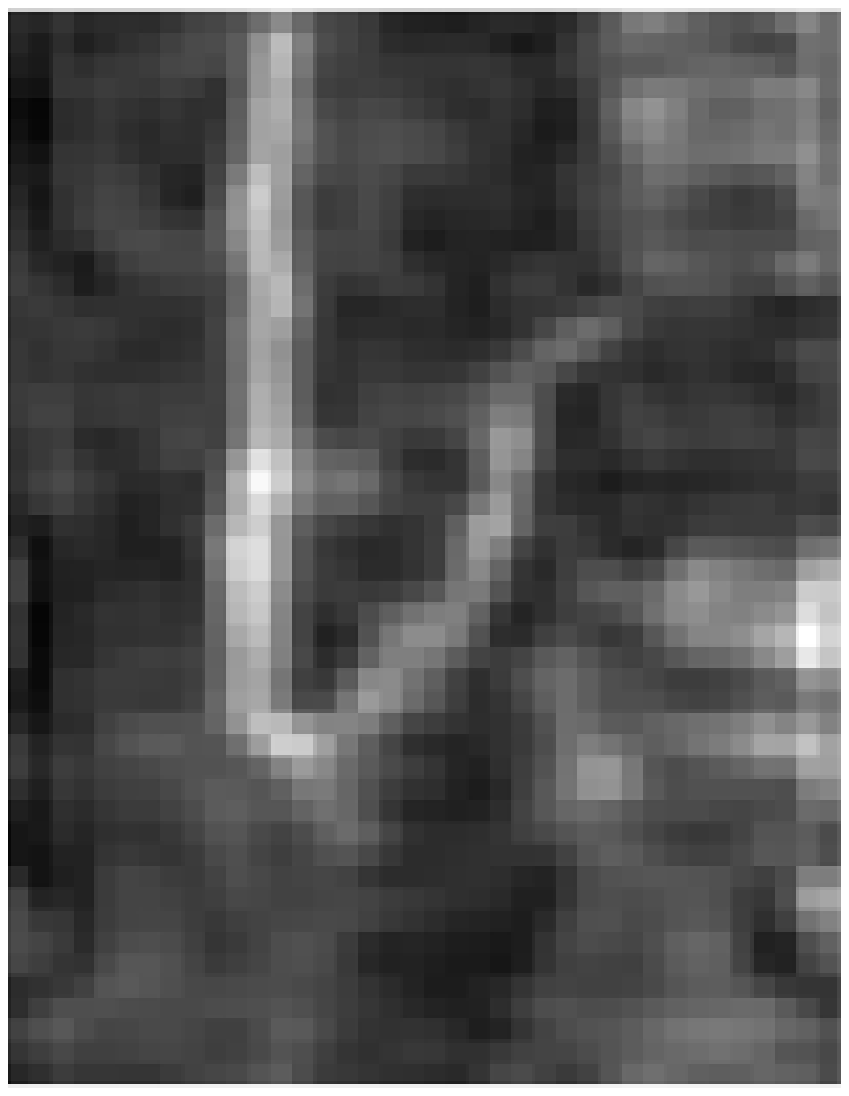}
	\includegraphics[width=0.17 \textwidth]{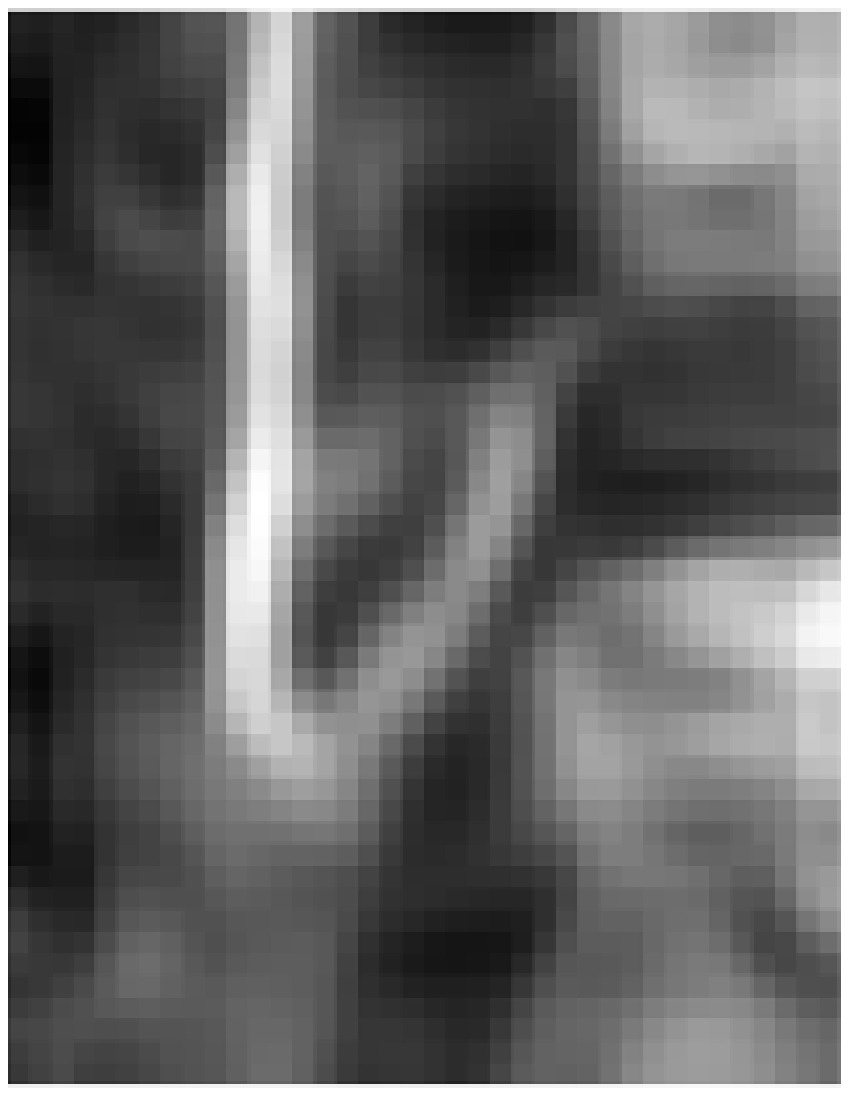}
	\includegraphics[width=0.17 \textwidth]{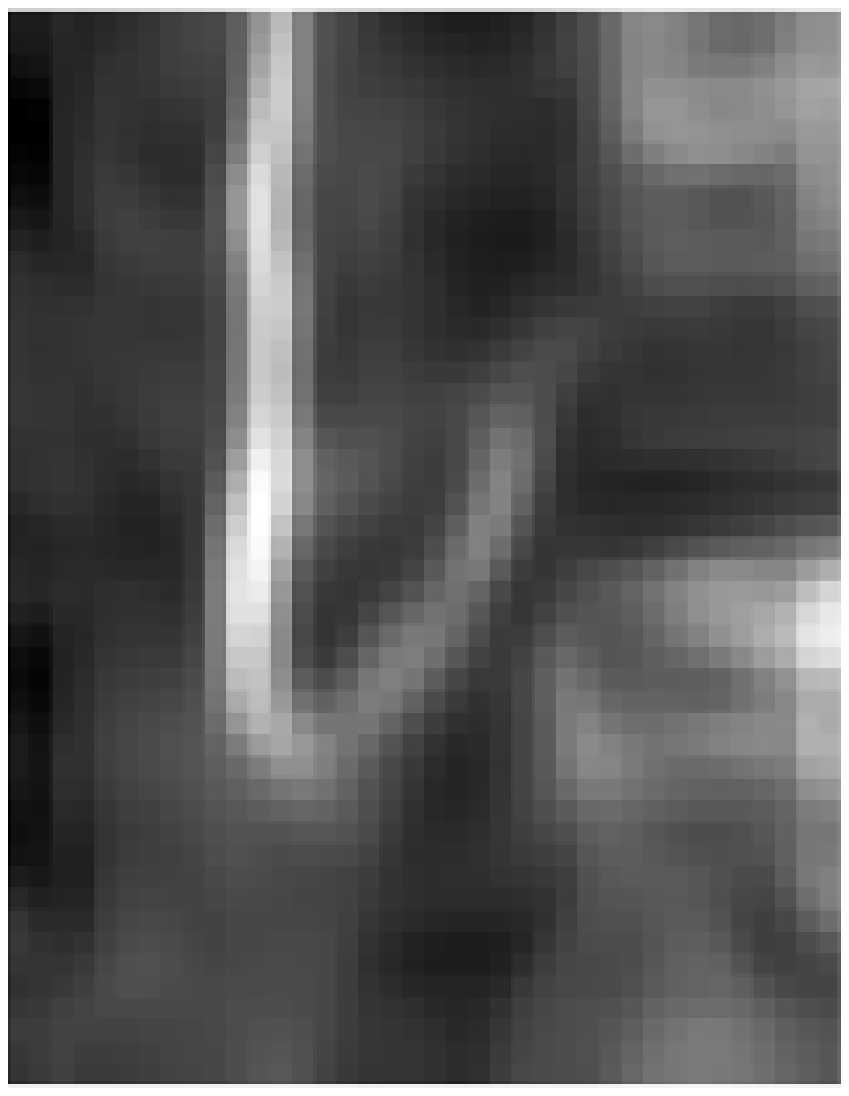}
  \caption{Adaptive Crossing Preserving Flows combined with soft thresholding $\Phi(U)(\vx,\vn)=|U(\vx,\vn)|^{1.5} \sgn(U(\vx,\vn))$ on data containing the Adam Kiewitzc vessel. From left to right: Slice of data, data after soft thresholding, data after CEDOS, data after CEDOS followed by soft thresholding. For parameters see Fig.\ref{fig:Results}, but now $t=5$.}
	\label{fig:ResultsAdamKiewitzc}\end{figure}

\subsection{3D Vessel Tracking in Magnetic Resonance Angiography (MRA) Data}
We use the 3D orientation scores to extend the earlier work on 2D vessel segmentation via invertible orientation scores \cite{Bekkers2013} to 3D vessel segmentation in MRA-data. Even though true crossing structures hardly appear in 3D data, we do encounter vessels touching other vessels/structures. The orientation scores also allow us to better handle complex structures, such as bifurcations. In Fig. \ref{fig:ResultsVesselTracking} we show some first results of the vessel segmentation algorithm.


	\begin{figure}[ht!]
\parbox{.5\textwidth}{
	\includegraphics[width=0.22 \textwidth]{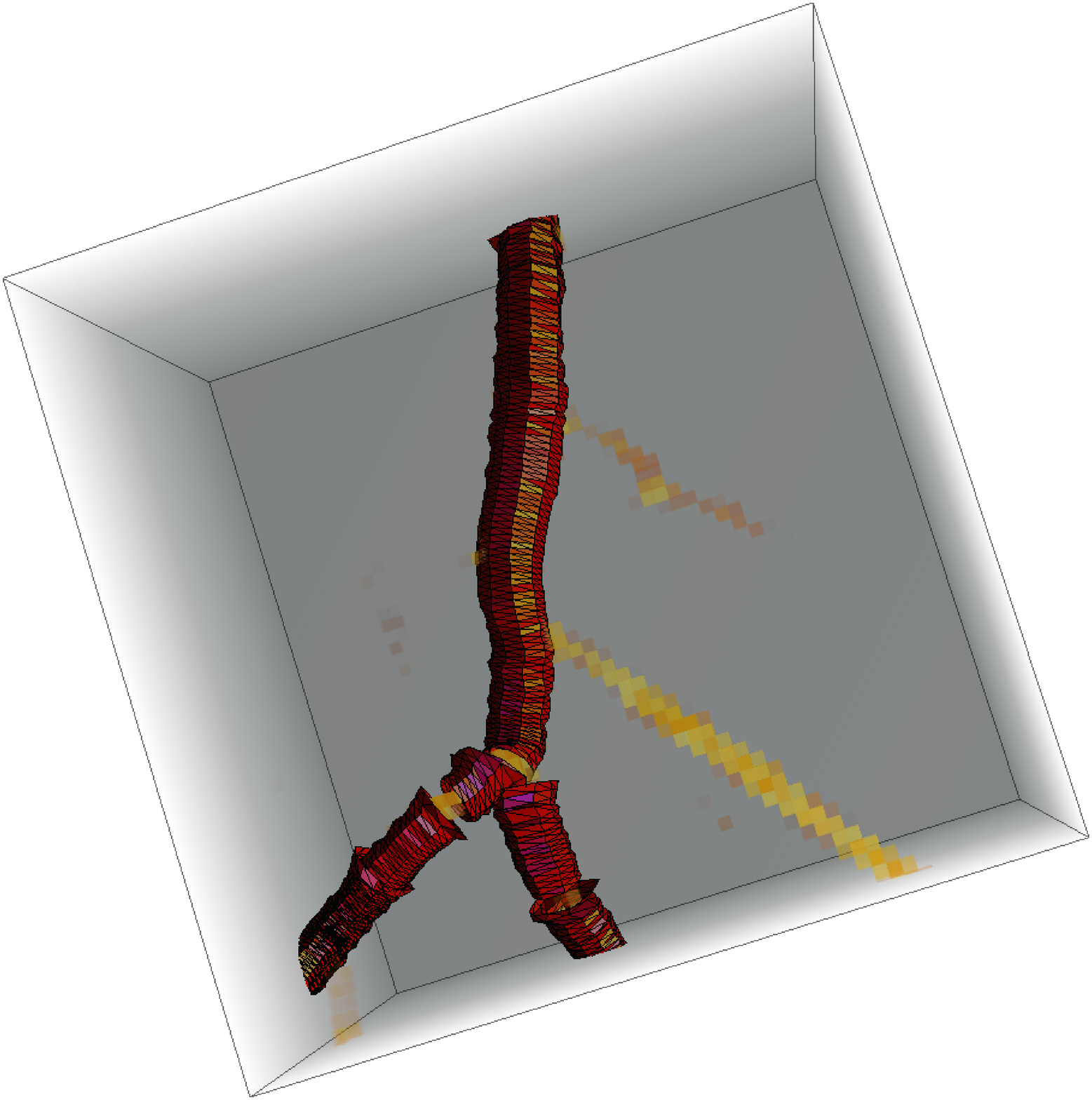}
	\includegraphics[width=0.22 \textwidth]{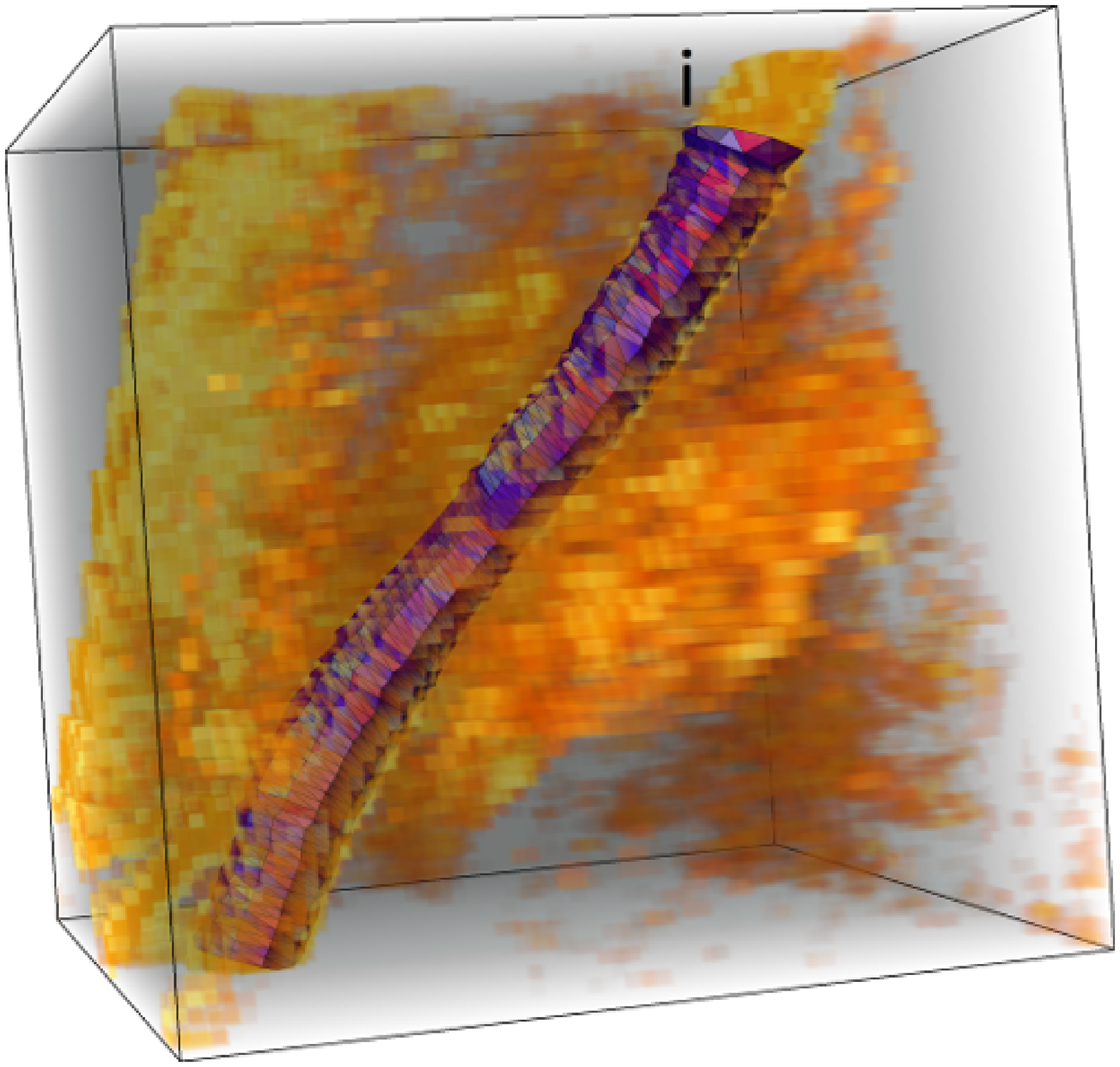}
}\hfill
\parbox{.5\textwidth}{
	\caption{MRA vessel segmentation via invertible orientation scores.}
	\label{fig:ResultsVesselTracking}
}\hfill\null
\end{figure}

\begin{samepage}

\section{Conclusion}
We have extended 2D cake-wavelets to 3D cake-wavelets, which can be used for a 3D invertible orientation score transformation. Efficient implementation for calculating the wavelets via spherical harmonics were introduced. The developed transformation allows us to consider all kinds of enhancement operations via orientation scores such as the adaptive crossing preserving flows which we are currently working on. Next to data-enhancement we also showed some first results of 3D vessel segmentation using 3D orientation scores.

\subsubsection*{Acknowledgements.}
\small
We thank Dr. A.J.E.M. Janssen for advice on the presentation of this paper. The research leading to these results has received funding from the European
Research Council under the European Community's Seventh Framework Programme
(FP7/2007-2013) / ERC grant \emph{Lie Analysis}, agr.~nr.~335555.
\end{samepage}

\appendix

\section{Invertible Orientation Scores of 3D-images and
Continuous Wavelet Theory}\label{app:AppendixA}

The continuous wavelet transform constructed by unitary irreducible representations of locally compact groups was first formulated by Grossman et al.~\cite{Grossmann1985}. Given a Hilbert space $H$ and a unitary irreducible representation $g\mapsto \mathcal{U}_g$ of any locally compact group $G$ in $H$, a non-zero vector $\psi\in H$ is called admissible if
\begin{align}\label{CpsiDef}
C_{\psi}:= \int_{G}\frac{|(\mathcal{U}_{g}\psi,\psi)|^2}{(\psi,\psi)_{H}}d\mu_{G}(g)<\infty,
\end{align}
where $\mu_G$ denotes the left-invariant Haar measure. Given an admissible vector $\psi$ and a unitary representation of a locally compact group $G$ in $H$, the Coherent State (CS) transform $W_\psi:H\rightarrow \mathbb{L}_{2}(G)$ is given by
$(W_\psi[f])(g)=(\mathcal{U}_{g}\psi,f)_H$. $W_\psi$ is an isometric transform onto a unique closed reproducing kernel space $\mathbb{C}_{K_{\psi}}^{G}$ with $K_{\psi}(g,g')=\frac{1}{C_\psi}(\mathcal{U}_g\psi,\mathcal{U}_{g'}\psi)_{H}$ as an $\mathbb{L}_2$-subspace \cite{Ali1998}.

We distinguish between the isometric wavelet transform $W_{\psi}:\mathbb{L}_2^\varrho(\mathbb{R}^3)\rightarrow\mathbb{L}_2(G)$ and the unitary wavelet transform $\cW_{\psi}:\mathbb{L}_2^\varrho(\mathbb{R}^3)\rightarrow \mathbb{C}_K^G$. We drop the formal requirement of $\mathcal{U}$ being square-integrable and $\psi$  being admissible in the sense of \eqref{CpsiDef}, and replace the requirement by  \eqref{eq:AdmissibilityRequirement}, as it is not strictly needed in many cases. This includes our case of interest $G=SE(3)$ and its left-regular action on $\mathbb{L}_2(\mathbb{R}^3)$ where $\cW_{\psi}$ gives rise to an orientation score $\cW_{\psi}f:\R^{3} \rtimes S^{2} \to \mathbb{C}$
\begin{equation}
\cW_{\psi}f(\ul{x},\ul{n})= \widetilde{\cW_{\psi}f}(\ul{x},\ul{R}_{\ul{n}}),
\end{equation}	
with $\ul{R}_{\ul{n}}$ \emph{any} rotation mapping $\ul{e}_{z}$ onto $\ul{n}$ and $\psi$ symmetric around the $z$-axis. Here the domain is the coupled space of positions and orientations: $\R^{3} \rtimes S^{2}:=SE(3)/(\{\ul{0}\} \times SO(2))$, cf.~\!\cite{DuitsIJCV2010}.

From the general theory of reproducing kernel spaces \cite [Thm 18] {ThesisDuits},\cite{Ali2014} (where one does not even rely on the group structure), it follows that $\cW_\psi: \LL_2^\varrho (\R^3) \rightarrow \mathbb {C}_K ^ {\R ^ 3 \rtimes S^2} $ is unitary, where $\mathbb {C}_K ^ {\R ^ 3 \rtimes S^2} $ denotes the abstract complex reproducing kernel space consisting of functions on ${\R ^ 3 \rtimes S^2} $ with reproducing kernel
\begin{equation}
K_{(\vy,\vn)} (\vy',\vn')=(\mathcal{U}_{(\vy,\mR_\vn)}\psi,\mathcal{U}_{(\vy',\mR_ {\vn'})}\psi)_{\mathbb{L}_{2}(\mathbb{R}^3)},
\label{eq:}
\end{equation}
with left-regular representation $(\vy,\mR)\mapsto \mathcal{U}_{(\vy,\mR)}\psi$ given by $(\mathcal{U}_{(\vy,\mR)}\psi ) (\vx) =\psi(\mR^T (\vx-\vy))$. Now, as the characterization of the inner product on $\mathbb {C}_K ^ {\R ^ 3 \rtimes S^2}$ is awkward \cite {Martens1988}, we provide a basic characterization next via the so-called $M_\psi$ inner product. This is in line with the admissibility conditions in \cite{Fuhr2005}.

\begin{theorem}\label{MPsiRecon}
Let $\psi$ be such that (\ref{eq:AdmissibilityRequirement}) holds. Then $\cW_\psi: \LL_2^\varrho (\R^3) \rightarrow \mathbb {C}_K ^ {\R ^ 3 \rtimes S^2} $ is unitary, and we have
\begin{equation}
(f,g)_{\LL_ 2 (\R ^ 3)} = (\cW_\psi f,\cW_\psi g)_{M_{\psi}},
\end{equation}
where $(\cW_\psi f,\cW_\psi g)_{M_{\psi}}=(\mathcal{T}_{M_{\psi}}[\cW_\psi f],\mathcal{T}_{M_{\psi}}[\cW_\psi g])_{\mathbb{L}_{2}(\R^3\rtimes S^2))}$,
with $[\mathcal{T}_{M_{\psi}}[U]](\vy,\vn):=\mathcal{F}^{-1}\bigg{[}\boldsymbol{\omega}\mapsto(2\pi)^{-3/4}M_{\psi}^{-1/2}(\boldsymbol{\omega})\mathcal{F}[U (\cdot,\vn)](\boldsymbol{\omega}) \bigg{]}(\vy)$.
\end{theorem}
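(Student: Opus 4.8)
The plan is to push the whole statement to the Fourier domain, where correlation with $\psi_\vn$ becomes multiplication by $\overline{\mathcal{F}\psi_\vn}$ and $M_\psi$ is precisely the factor that undoes the resulting distortion. The crux is the inner-product identity $(f,g)_{\LL_2(\R^3)}=(\cW_\psi f,\cW_\psi g)_{M_\psi}$; once it is established everything else follows quickly. Indeed, taking $f=g$ gives $\|\cW_\psi f\|_{M_\psi}=\|f\|_{\LL_2(\R^3)}$, hence injectivity of $\cW_\psi$ and an isometry of $\LL_2^\varrho(\R^3)$ onto its image equipped with $(\cdot,\cdot)_{M_\psi}$; moreover $|\cW_\psi f(\vy,\vn)|=|(\mathcal{U}_{(\vy,\mR_\vn)}\psi,f)_{\LL_2(\R^3)}|\le\|\psi\|_{\LL_2(\R^3)}\|f\|_{\LL_2(\R^3)}=\|\psi\|_{\LL_2(\R^3)}\|\cW_\psi f\|_{M_\psi}$, so point evaluations on the image are bounded. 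The cited general reproducing-kernel theory then identifies this image with $\mathbb{C}_K^{\R^3\rtimes S^2}$ and makes $\cW_\psi$ unitary onto it; and since both $(\cdot,\cdot)_{M_\psi}$ and the abstract $(\cdot,\cdot)_{\mathbb{C}_K^{\R^3\rtimes S^2}}$ make $\cW_\psi$ an isometry out of the \emph{same} space, writing $U=\cW_\psi f$, $V=\cW_\psi g$ forces the two inner products to coincide. So no separate argument for the inner-product characterization is needed beyond the identity.

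\noindent So I would concentrate all the work on the identity. A preliminary well-definedness check: since $\psi\in\LL_1(\R^3)$ we have $\mathcal{F}\psi_\vn\in\LL_\infty(\R^3)$ with $\|\mathcal{F}\psi_\vn\|_{\LL_\infty}=\|\mathcal{F}\psi\|_{\LL_\infty}$ independent of $\vn$, while $f\in\LL_2^\varrho(\R^3)$ gives $\mathcal{F}f\in\LL_2(\R^3)$ with $\textrm{supp}(\mathcal{F}f)\subset B_{0,\varrho}$; hence $\cW_\psi f(\cdot,\vn)=\overline{\psi_\vn}\star f\in\LL_2(\R^3)$ and, for the Fourier normalization fixed above,
\begin{equation}
\mathcal{F}\big[\cW_\psi f(\cdot,\vn)\big](\vect{\omega})=(2\pi)^{3/2}\,\overline{\mathcal{F}[\psi_\vn](\vect{\omega})}\;\mathcal{F}f(\vect{\omega}),
\end{equation}
again supported in $B_{0,\varrho}$; by the lower bound in \eqref{eq:AdmissibilityRequirement}, $M_\psi\ge\delta>0$ there, so $M_\psi^{-1/2}\mathcal{F}[\cW_\psi f(\cdot,\vn)]\in\LL_2(\R^3)$ with norm uniform in $\vn$, and as $S^2$ is compact $\mathcal{T}_{M_\psi}[\cW_\psi f]\in\LL_2(\R^3\rtimes S^2)$. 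For the computation itself, Plancherel in the spatial variable inside $\mathcal{T}_{M_\psi}$ rewrites $(\cW_\psi f,\cW_\psi g)_{M_\psi}$ as $(2\pi)^{-3/2}\int_{S^2}\int_{\R^3}M_\psi^{-1}(\vect{\omega})\,\overline{\mathcal{F}[\cW_\psi f(\cdot,\vn)](\vect{\omega})}\,\mathcal{F}[\cW_\psi g(\cdot,\vn)](\vect{\omega})\,\d\vect{\omega}\,\d\sigma(\vn)$; inserting the correlation identity turns the integrand into $(2\pi)^{3/2}M_\psi^{-1}(\vect{\omega})\,|\mathcal{F}\psi_\vn(\vect{\omega})|^2\,\overline{\mathcal{F}f(\vect{\omega})}\,\mathcal{F}g(\vect{\omega})$, which is absolutely integrable over $S^2\times\R^3$ since on $B_{0,\varrho}$ the factors $M_\psi^{-1}$ and $|\mathcal{F}\psi_\vn|^2$ are bounded and $\overline{\mathcal{F}f}\,\mathcal{F}g\in\LL_1(\R^3)$. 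Tonelli then allows exchanging the integrals and pulling $\overline{\mathcal{F}f}\,\mathcal{F}g$ outside the $S^2$-integral; by the definition of $M_\psi$ one has $(2\pi)^{3/2}\int_{S^2}|\mathcal{F}\psi_\vn(\vect{\omega})|^2\,\d\sigma(\vn)=M_\psi(\vect{\omega})$, so $M_\psi^{-1}(\vect{\omega})M_\psi(\vect{\omega})=1$ throughout $\textrm{supp}(\mathcal{F}f)$ and what remains is $\int_{\R^3}\overline{\mathcal{F}f(\vect{\omega})}\,\mathcal{F}g(\vect{\omega})\,\d\vect{\omega}=(f,g)_{\LL_2(\R^3)}$ by Plancherel once more.

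\noindent The only steps that are more than bookkeeping are the Tonelli interchange and the behaviour of $M_\psi^{-1}$ near $\partial B_{0,\varrho}$, and both are handled exactly by the two-sided bound \eqref{eq:AdmissibilityRequirement} together with the band-limitedness $\textrm{supp}(\mathcal{F}f)\subset B_{0,\varrho}$; the rest is care with the powers of $2\pi$ in the Fourier normalization. If one prefers not to appeal to the abstract reproducing-kernel theorem for identifying the kernel, an equivalent route is to check directly that $(\cdot,\cdot)_{M_\psi}$ has bounded point evaluations (as above) and that its reproducing kernel is the $K$ displayed before the theorem, by computing $\cW_\psi\big[\cW_\psi^{-1}K_{(\vy,\vn)}\big]$ from the reconstruction formula \eqref{eq:Reconstruction1}; this is again the same Fourier computation read backwards.
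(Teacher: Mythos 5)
Your proposal is correct, but it takes a different route from the paper. The paper's entire proof is a citation: it invokes Theorem~1 of Sharma--Duits (reference \cite{Sharma2014}, on wavelet transforms for the similitude group) with $H=\LL_2(\R^3)$, and then asserts that the statement follows ``by well posed restriction to the quotient $\R^3\rtimes S^2$''. You instead carry out the underlying computation explicitly: transfer to the Fourier domain, where correlation with $\psi_\vn$ becomes multiplication by $(2\pi)^{3/2}\overline{\mathcal{F}\psi_\vn}$, apply Plancherel inside $\mathcal{T}_{M_\psi}$, interchange the $S^2$- and $\vect{\omega}$-integrals (justified by the two-sided bound (\ref{eq:AdmissibilityRequirement}) on the band $B_{0,\varrho}$), and observe that the $S^2$-integral reproduces $M_\psi(\vect{\omega})$ by its very definition, cancelling the $M_\psi^{-1}$ weight. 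Your powers of $2\pi$ check out, and your argument that the $M_\psi$-inner product must agree with the abstract reproducing-kernel inner product on the image (since both render $\cW_\psi$ isometric from the same domain) correctly disposes of the unitarity claim, given the general reproducing-kernel theory the paper already quotes before the theorem. What your approach buys is self-containedness and transparency: it makes visible exactly where band-limitedness and the bounds $\delta\le M_\psi\le M$ are used. What the paper's approach buys is brevity and a cleaner conceptual placement of the result within the coherent-state framework for the full group, with the $S^2$-statement obtained by passing to the quotient $SE(3)/(\{\ul{0}\}\times SO(2))$ using the rotational symmetry of $\psi$ --- a step your proof sidesteps entirely by working with $\R^3\rtimes S^2$ from the start.
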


\begin{proof}
We rely on \cite [Thm 1] {Sharma2014}, where we set $H =\LL_2 (\R^3)$. The rest follows by well posed restriction to the quotient $\R^3\rtimes S^2$.
\end{proof}

\begin{corollary}
Let $M_\psi>0$ on $\R^3$. The space $\mathbb{C}_{K}^{\R^3\rtimes S^2}$ is a closed subspace of Hilbert space $\mathbb{H}_{\psi}\otimes\mathbb{L}_2(S^2)$, where $\mathbb{H}_{\psi}=\{f\in \mathbb{L}_{2}(\R^3)|\ M_{\psi}^{-\frac{1}{2}}\mathcal{F}[f]\in\mathbb{L}_{2}(\mathbb{R}^3)\}$, and projection of embedding space onto the space of orientation scores is given by
$(\mathbb {P}_\psi (U)) (\vy,\vn)= (K_{(\vn,\vy)},U)_{M_\psi} = (\cW_\psi  \cW_\psi ^ {*,ext} (U)) (\vy,\vn)$,
where $\cW_\psi ^ {*,ext}$ is the natural extension of the adjoint to the embedding space.
\end{corollary}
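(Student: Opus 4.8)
The plan is to push everything through the partial--Fourier isometry $\mathcal{T}_{M_\psi}$ of Theorem~\ref{MPsiRecon} and then invoke two textbook facts: the image of a Hilbert space under a linear isometry into another Hilbert space is a closed subspace, and if $T$ is an isometry then $TT^{*}$ is the orthogonal projection onto its range. First I would collect the elementary facts about the ambient space. Since $\psi\in\LL_1(\R^3)$, $\mathcal{F}\psi$ is bounded and continuous, and by rotational symmetry $\mathcal{F}\psi_{\vn}(\boldsymbol{\omega})=\mathcal{F}\psi(\mR_{\vn}^{T}\boldsymbol{\omega})$, so $M_\psi(\boldsymbol{\omega})=(2\pi)^{3/2}\int_{S^2}|\mathcal{F}\psi(\mR_{\vn}^{T}\boldsymbol{\omega})|^2\,\d\sigma(\vn)$ is continuous and bounded on $\R^3$. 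Together with $M_\psi>0$, this makes $f\mapsto\mathcal{F}^{-1}[(2\pi)^{-3/4}M_\psi^{-1/2}\mathcal{F}f]$ a unitary identification of $\mathbb{H}_\psi$, with its induced inner product, onto $\LL_2(\R^3)$; hence $\mathbb{H}_\psi$, and consequently $\mathbb{H}_\psi\otimes\LL_2(S^2)\cong\LL_2(S^2;\mathbb{H}_\psi)$, is a Hilbert space, on which $\mathcal{T}_{M_\psi}$ (acting in the spatial variable, orientation by orientation) is a unitary onto $\LL_2(\R^3\rtimes S^2)$. Continuity and positivity of $M_\psi$ also yield the two-sided bound \eqref{eq:AdmissibilityRequirement} on the compact set $\overline{B_{0,\varrho}}$, so Theorem~\ref{MPsiRecon} applies.

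Next I would verify the inclusion $\mathbb{C}_{K}^{\R^3\rtimes S^2}\hookrightarrow\mathbb{H}_\psi\otimes\LL_2(S^2)$ and that it is isometric. Writing $U=\cW_\psi f$, the correlation theorem gives $\mathcal{F}[U(\cdot,\vn)](\boldsymbol{\omega})=(2\pi)^{3/2}\,\overline{\mathcal{F}\psi_{\vn}(\boldsymbol{\omega})}\,\mathcal{F}f(\boldsymbol{\omega})$, so that by Tonelli (all integrands nonnegative) and the defining identity of $M_\psi$,
\begin{equation*}
\int_{S^2}\!\!\int_{\R^3} M_\psi^{-1}(\boldsymbol{\omega})\,\bigl|\mathcal{F}[U(\cdot,\vn)](\boldsymbol{\omega})\bigr|^2\,\d\boldsymbol{\omega}\,\d\sigma(\vn)=(2\pi)^{3/2}\!\int_{\R^3}\bigl|\mathcal{F}f(\boldsymbol{\omega})\bigr|^2\,\d\boldsymbol{\omega}<\infty .
\end{equation*}
This shows $U\in\mathbb{H}_\psi\otimes\LL_2(S^2)$ and, after fixing the normalization of the $\mathbb{H}_\psi$-inner product to be the one compatible with $\mathcal{T}_{M_\psi}$, that $\|U\|_{\mathbb{H}_\psi\otimes\LL_2(S^2)}=\|\mathcal{T}_{M_\psi}U\|_{\LL_2(\R^3\rtimes S^2)}=\|U\|_{M_\psi}=\|f\|_{\LL_2(\R^3)}$, the last equality by Theorem~\ref{MPsiRecon}. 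Thus $\cW_\psi\colon\LL_2^\varrho(\R^3)\to\mathbb{H}_\psi\otimes\LL_2(S^2)$ is an isometry, and since $\LL_2^\varrho(\R^3)$ is a closed (hence complete) subspace of $\LL_2(\R^3)$, its range $\mathbb{C}_K^{\R^3\rtimes S^2}=\operatorname{ran}\cW_\psi$ is a closed linear subspace.

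It remains to identify the orthogonal projection $\mathbb{P}_\psi$ of $\mathbb{H}_\psi\otimes\LL_2(S^2)$ onto this closed subspace. Let $\cW_\psi^{*,ext}$ be the Hilbert adjoint of the isometry $\cW_\psi\colon\LL_2^\varrho(\R^3)\to\mathbb{H}_\psi\otimes\LL_2(S^2)$; it restricts to $\cW_\psi^{-1}=\cW_\psi^{*}$ on $\mathbb{C}_K$ and vanishes on $\mathbb{C}_K^{\perp}$, and $\cW_\psi^{*,ext}\cW_\psi=\mathrm{Id}$. Hence $\cW_\psi\cW_\psi^{*,ext}$ is self-adjoint, idempotent, and has range $\mathbb{C}_K$, i.e. it is $\mathbb{P}_\psi$. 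For the reproducing-kernel expression, fix $U$ in the ambient space; then $U-\mathbb{P}_\psi U\perp\mathbb{C}_K$, and since $K_{(\vy,\vn)}\in\mathbb{C}_K$, the ambient inner product restricts on $\mathbb{C}_K$ to $(\cdot,\cdot)_{M_\psi}$, and $K$ is the reproducing kernel of $(\mathbb{C}_K,(\cdot,\cdot)_{M_\psi})$ by Theorem~\ref{MPsiRecon}, we get
\begin{equation*}
(\mathbb{P}_\psi U)(\vy,\vn)=(K_{(\vy,\vn)},\mathbb{P}_\psi U)_{M_\psi}=(K_{(\vy,\vn)},U)_{M_\psi}.
\end{equation*}
I expect the only genuinely delicate point to be the normalization bookkeeping: one must pin down the inner product on $\mathbb{H}_\psi$ (equivalently the constants in $\mathcal{T}_{M_\psi}$ and in the correlation theorem) so that $\cW_\psi$ is an \emph{exact} isometry into the ambient space; otherwise $\mathbb{P}_\psi$ would only equal a positive scalar multiple of $\cW_\psi\cW_\psi^{*,ext}$. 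Everything else is an assembly of Theorem~\ref{MPsiRecon}, the Tonelli computation above, and the standard reproducing-kernel and $TT^{*}$ arguments.
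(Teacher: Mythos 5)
Your proof is correct, and it is essentially the argument the authors intend: the paper states this corollary without proof, leaving it as a consequence of Theorem~\ref{MPsiRecon} together with the standard facts that the range of an isometry from a complete space is closed and that $TT^{*}$ is the orthogonal projection onto that range for an isometry $T$. Your normalization bookkeeping checks out (the $(2\pi)^{3/2}$ from the correlation theorem cancels against the $(2\pi)^{-3/4}$ squared in $\mathcal{T}_{M_\psi}$, so $\cW_\psi$ is an exact isometry into $\mathbb{H}_\psi\otimes\LL_2(S^2)$), and the reproducing-kernel identity follows as you say from $U-\mathbb{P}_\psi U\perp\mathbb{C}_K$; the only point you share with the paper in glossing over is that the reproducing property $(K_{(\vy,\vn)},\cW_\psi f)_{M_\psi}=\cW_\psi f(\vy,\vn)$ uses Theorem~\ref{MPsiRecon} with $\mathcal{U}_{(\vy,\mR_\vn)}\psi$ in place of $g$, which strictly requires replacing $\mathcal{U}_{(\vy,\mR_\vn)}\psi$ by its band-limited projection onto $\LL_2^{\varrho}(\R^3)$ (harmless, since this does not change the inner product against $f\in\LL_2^{\varrho}(\R^3)$).
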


\bibliographystyle{plain}

\begin{thebibliography}{10}

\bibitem{Ali1998}
S.T. Ali.
\newblock {A general theorem on square-integrability: Vector coherent states}.
\newblock {\em J. Math. Phys.}, 39(8):3954, 1998.

\bibitem{Ali2014}
S.T. Ali, J.-P. Antoine, and J.-P. Gazeau.
\newblock {\em {Coherent states, wavelets, and their generalizations}}.
\newblock Springer, 2014.

\bibitem{Bekkers2013}
E.~Bekkers and R.~Duits.
\newblock {A multi-orientation analysis approach to retinal vessel tracking}.
\newblock {\em JMIV}, 2014.

\bibitem{BurgethBook2009}
B.~Burgeth, S.~Didas, and J.~Weickert.
\newblock {A general structure tensor concept and coherence-enhancing diffusion
  filtering for matrix fields}.
\newblock In {\em Visualization and processing of tensor fields}, pages
  305--324. Springer,Berlin, 2009.

\bibitem{Burgeth2012}
B.~Burgeth, L.~Pizarro, S.~Didas, and J.~Weickert.
\newblock {3D-Coherence-enhancing diffusion filtering for matrix fields}.
\newblock In {\em Mathematical methods for signal and image analysis and
  representation}, pages 49--63. Springer London, 2012.

\bibitem{Creusen2012}
E.J. Creusen, R.~Duits, and T.C.J. Dela~Haije.
\newblock {Numerical schemes for linear and non-linear enhancement of DW-MRI}.
\newblock {\em SSVM}, 1:14--25, 2012.

\bibitem{Descoteaux2007}
M.~Descoteaux, E.~Angelino, S.~Fitzgibbons, and R.~Deriche.
\newblock {Regularized, fast, and robust analytical Q-ball imaging.}
\newblock {\em MRM}, 58(3):497--510, September 2007.

\bibitem{ThesisDuits}
R.~Duits.
\newblock {\em {Perceptual organization in image analysis}}.
\newblock PhD thesis, Technische Universiteit Eindhoven, 2005.

\bibitem{DuitsIJCV2010}
R.~Duits and E.M. Franken.
\newblock {Left-invariant diffusions on the space of positions and orientations
  and their application to crossing-preserving smoothing of HARDI images}.
\newblock {\em IJCV}, 92(3):231--264, March 2010.

\bibitem{GaugeFrameNew}
R.~Duits, M.H.J. Janssen, J.~Hannink, and G.R. Sanguinetti.
\newblock {Locally Adaptive Frames in the Roto-Translation Group and their
  Applications in Medical Imaging}.
\newblock {\em arXiv preprint: arXiv:1502.08002}.

\bibitem{Franken2009}
E.M. Franken and R.~Duits.
\newblock {Crossing-preserving coherence-enhancing diffusion on invertible
  orientation scores}.
\newblock {\em IJCV}, 85(3):253--278, February 2009.

\bibitem{Fuhr2005}
F.~F\"{u}hr.
\newblock {Abstract harmonic analysis of continuous wavelet transforms}.
\newblock {\em Lecture Notes in Mathematics}, 1863, 2005.

\bibitem{Grossmann1985}
A.~Grossmann, J.~Morlet, and T.~Paul.
\newblock {Transforms associated to square integrable group representations. I.
  General results}.
\newblock {\em J. Math. Phys.}, 26(10):2473, 1985.

\bibitem{Kalitzin1999}
S.N. Kalitzin, B.M. {ter Haar Romeny}, and M.A. Viergever.
\newblock {Invertible apertured orientation filters in image analysis}.
\newblock {\em IJCV}, 31:145--158, 1999.

\bibitem{Lee1996}
Tai~Sing Lee.
\newblock {Image representation using 2D Gabor wavelets}.
\newblock {\em IEEE TPAMI}, 18(10), 1996.

\bibitem{Martens1988}
F.J.L. Martens.
\newblock {\em {Spaces of analytical functions on inductive/projective limits
  of Hilbert spaces}}.
\newblock PhD thesis, Technische Universiteit Eindhoven, 1988.

\bibitem{Sharma2014}
U.~Sharma and R.~Duits.
\newblock {Left-invariant evolutions of wavelet transforms on the similitude
  group}.
\newblock {\em accepted for publication ACHA, doi:10.1016/j.acha.2014.09.001}.

\bibitem{Weickert1999}
J.~Weickert.
\newblock {Coherence-enhancing diffusion filtering}.
\newblock {\em IJCV}, 31:111--127, 1999.

\end{thebibliography}

%
%
%
%
%
\end{document}